\numberwithin{equation}{section}
\newtheorem{thm}{Theorem}[section]
\newtheorem{lem}[thm]{Lemma}
\newtheorem{cor}[thm]{Corollary}
\newtheorem{prop}[thm]{Proposition}
\theoremstyle{remark}
\newtheorem{rem}[thm]{Remark}
\newtheorem{example}[thm]{Example}
\theoremstyle{definition}
\theoremstyle{definition}
\newtheorem{question}[thm]{Question}
\theoremstyle{definition}
\newtheorem{defn}[thm]{Definition}
\numberwithin{equation}{section}
\newcommand{\Z}{\mathbb Z}
\newcommand{\C}{\mathbb C}
\newcommand{\fg}{\mathfrak{g}}
\newcommand{\fb}{\mathfrak{b}}
\newcommand{\fl}{\mathfrak{l}}
\newcommand{\ft}{\mathfrak{t}}
\newcommand{\fs}{\mathfrak{s}}
\newcommand{\B}{\mathcal{B}}
\newcommand{\ca}{\mathcal{A}}
\newcommand{\ci}{\mathcal{I}}
\newcommand{\cj}{\mathcal{J}}
\DeclareMathOperator{\pt}{pt}
\DeclareMathOperator{\diag}{diag}
\DeclareMathOperator{\Inv}{Inv}
\DeclareMathOperator{\ev}{ev}
\DeclareMathOperator{\bfx}{\mathbf{x}}
\newcommand{\RSCT}{{\tt RSCT}}
\newcommand{\GP}{{\tt Sp}}
\newcommand{\tabone}{\Upsilon}
\newcommand{\tabtwo}{\Omega}
\newcommand{\map}{\eta}
\def\presuper#1#2%
\begin{document}

\title[An equivariant basis for the cohomology of Springer fibers]{An equivariant basis for the cohomology of Springer fibers}
\author{Martha Precup}
\address{Department of Mathematics and Statistics\\ Washington University in St. Louis \\ St. Louis, Missouri  63130 \\ U.S.A. }
\email{martha.precup@wustl.edu}

\author{Edward Richmond}
\address{Department of Mathematics\\ Oklahoma State University  \\ Stillwater, Oklahoma 74078 \\ U.S.A. }
\email{edward.richmond@okstate.edu}


%
\maketitle

\begin{abstract} Springer fibers are subvarieties of the flag variety that play an important role in combinatorics and geometric representation theory.  In this paper, we analyze the equivariant cohomology of Springer fibers for $GL_n(\C)$ using results of Kumar and Procesi that describe this equivariant cohomology as a quotient ring.  We define a basis for the equivariant cohomology of a Springer fiber, generalizing a monomial basis of the ordinary cohomology defined by De Concini and Procesi and studied by Garsia and Procesi.  Our construction yields a combinatorial framework with which to study the equivariant and ordinary cohomology rings of Springer fibers.  As an application, we identify an explicit collection of (equivariant) Schubert classes whose images in the (equivariant) cohomology ring of a given Springer fiber form a basis.
\end{abstract}

\section{introduction}

This paper analyzes the equivariant cohomology of Springer fibers in Lie type A.  Springer fibers are fibers of a desingularization of the nilpotent cone in $\mathfrak{gl}_n(\C)$.   Springer showed that the symmetric group acts on the cohomology of each Springer fiber, the top-dimensional cohomology is an irreducible representation, and each irreducible symmetric group representation can be obtained in this way \cite{Springer1976, Springer1978}.  As a consequence, Springer fibers frequently arise in geometric representation theory and algebraic combinatorics; see \cite{Spaltenstein1976, Fresse2009, Fresse2009-2, Fung2003, Graham-Zierau2011, PT19} for just a few examples.

There is also an algebraic approach to the Springer representation for $GL_n(\C)$, as we now explain.   Motivated by a conjecture of Kraft~\cite{Kraft1981}, De Concini and Procesi \cite{DP81} gave a presentation for the cohomology of a type A Springer fiber as the quotient of a polynomial ring.   Furthermore, this identification is $S_n$-equivariant so Springer's representation can also be constructed as the symmetric group action on the quotient of a polynomial ring.  These results were generalized to the setting of other algebraic groups by Carrell in \cite{Ca86}.

The generators of the ideal defining the presentation of the cohomology of a type A Springer fiber were further simplified by Tanisaki~\cite{Tanisaki1982}. Finally, Garsia and Procesi used the aforementioned results to study the graded character of the Springer representation in \cite{GP92}.  Their work gives a linear algebraic proof that this character is closely connected to the so-called $q$-Kostka polynomials. As part of their analysis, Garsia and Procesi study a monomial basis for the cohomology ring, originally defined by De Concini and Procesi in~\cite{DP81}, with many amenable combinatorial and inductive properties.  We refer to the collection of these monomials as the \textit{Springer monomial basis}.

Let $GL_n(\C)$ denote the algebraic group of $n\times n$ invertible matrices with Lie algebra $\mathfrak{gl}_n(\C)$ of $n\times n$ matrices.   Denote by $B$ the Borel subgroup of upper triangular matrices, and by $\fb$ its Lie algebra.  Given a nilpotent matrix $X\in \mathfrak{gl}_n(\C)$, let $\lambda$ be the partition of $n$ determined by the sizes of the Jordan blocks of $X$.  The flag variety of $GL_n(\C)$ is the quotient $\B:=GL_n(\C)/B$ and the Springer fiber corresponding to $\lambda$ is defined as the subvariety
$$\B^\lambda:=\{gB\in \B \ |\ g^{-1}Xg\in \fb\}.$$
Let $T$ denote the maximal torus of diagonal matrices in $GL_n(\C)$ and $L$ be the Levi subgroup of block diagonal matrices with block sizes determined by the partition $\lambda$.  We may assume without loss of generality that $X$ is in Jordan canonical form, and hence $X$ is regular in the Lie algebra of $L$.  Moreover, the subtorus $S:=Z_G(L)_0 \subseteq T$ acts on the Springer fiber $\B^\lambda$.  We consider the equivariant cohomology $H_S^*(\B^\lambda)$.  The goal of this manuscript is provide a combinatorial framework to study this equivariant cohomology.

There is a known presentation for $H_S^*(\B^\lambda)$ given by Kumar and Procesi \cite{KP12}, and the equivariant Tanisaki ideal has been determined by Abe and Horiguchi~\cite{AH2016}.  Our work below initiates a study of $H_S^*(\B^\lambda)$ which parallels the analysis of the ordinary cohomology by Garsia and Procesi in \cite{GP92}.   We define a collection of polynomials in  $H_S^*(\B^\lambda)$ using the combinatorics of row-strict tableaux. Since these polynomials map onto the Springer monomial basis under the natural projection map from equivariant to ordinary cohomology $H_S^*(\B^\lambda)\to H^*(\B^\lambda)$, we call them \textit{equivariant Springer monomials}.  We prove that a basis of equivariant Springer monomials exists for any Springer fiber, and provide a determinant formula (see Theorem~\ref{T:P_basis_det_formula} below) for the structure constants of any element of $H_S^*(\B^\lambda)$ with respect to this basis.

As an application, we use the algebraic and combinatorial framework developed in this manuscript to study the images of Schubert classes in $H^*(\B^\lambda)$.  Let $\phi:\B^\lambda\hookrightarrow \B$ denote the inclusion of varieties, and $\phi_0^*: H^*(\B)\to H^*(\B^\lambda)$ the induced map on ordinary cohomology.  We prove that for every partition $\lambda$, there is a natural collection of Schubert classes whose images under $\phi_0^*$ form an additive basis of $H^*(\B^\lambda)$. This result appears as Theorem~\ref{T:Schubert_Basis} in Section~\ref{sec.monomial-Schubert} below and  Corollary~\ref{cor.equiv-main-thm} contains the equivariant version of the statement.   Phrased in terms of the work of Harada and Tymoczko in \cite{HT17}, the equivariant version of Theorem~\ref{T:Schubert_Basis} says that there exists a successful game of \textit{Betti poset pinball} for each type A Springer fiber.  As a result, we can do  computations in the (equivariant) cohomology ring more easily, as combinatorial properties of (double) Schubert polynomials are well-studied (c.f., for example, \cite{Manivel}).  Bases of this kind have been used to do Schubert calculus style computations in the equivariant cohomology rings of other subvarieties of the flag variety \cite{HT11, Drellich2015}; the authors will explore analogous computations for Springer fibers in future work.

Our Theorem~\ref{T:Schubert_Basis} generalizes results of Harada--Tymoczko \cite{HT17} and Dewitt--Harada \cite{DH} which address the case of $\lambda=(n-1,1)$ and $\lambda=(n-2,2)$, respectively.   The main difficulty in generalizing the methods used in those papers is that the equivariant cohomology classes in $H_S^*(\B^\lambda)$ constructed via poset pinball may not satisfy upper triangular vanishing conditions (with respect to some partial ordering on the set of $S$-fixed points of $\B^\lambda$).  The methods used to prove Theorem~\ref{T:Schubert_Basis} side-step this difficulty by making use of the equivariant Springer monomials.  Combining our determinantal formula for the structure coefficients of this basis with known combinatorial properties of the Schubert polynomials yields the desired result.

Let $\B_{w}:=\overline{BwB/B}$ denote the Schubert variety corresponding to a permutation $w\in S_n$.  Recall that the Schubert polynomial $\mathfrak{S}_w(\mathbf{x})$ represents the fundamental cohomology class of the Schubert variety $\B_{w_0w}$ where $w_0$ denotes the longest element in $S_n$.  That is, $\mathfrak{S}_w(\mathbf{x})$ is a polynomial representative for the cohomology class $\sigma_{w}\in H^*(\B)$ defined uniquely by the property that $\sigma_w\cap [\B]=[\B_{w_0w}]$. Here $[\B]$ and $[\B_{w_0w}]$ denote the fundamental homology classes of $\B$ and $\B_{w_0w}$, respectively, and $\cap [\B]:H^*(\B)\rightarrow H_*(\B)$ denotes the Poincar\'e duality isomorphism obtained by taking the cap product with the top fundamental class.

In this paper, we study the polynomials $\phi_0^*(\mathfrak{S}_w(\mathbf{x}))$ in $H^*(\B^\lambda)$ from a combinatorial perspective. On the other hand, each is a polynomial representative for the cohomology class $\phi_0^*(\sigma_w)$ and it is natural to ask if these classes have geometric meaning. In the last section, we show that the classes $\phi_0^*(\sigma_w)$ play an analogous role with respect to the homology of $\B^\lambda$ as that played by the Schubert classes with respect to the homology of $\B$. More precisely, we prove in Proposition \ref{P:homology_classes} below that
\[
\phi_0^*(\sigma_w)\cap [\B^\lambda]=[\B^\lambda\cap g\B_{w_0w}]\in H_*(\B^\lambda)
\]
for generic $g\in GL_n(\C)$. Here $\cap [\B^\lambda]:H^*(\B^\lambda)\rightarrow H_*(\B^\lambda)$ denotes capping with the top fundamental class $[\B^\lambda]\in H_*(\B^\lambda)$. Since $\B^\lambda$ is typically not smooth, this map is not an isomorphism of groups.

The remainder of the paper is structured as follows.  The next section covers the necessary background information and notation needed in later sections, including a presentation of the equivariant cohomology of the Springer fiber due to Kumar and Procesi.  The third and fourth sections of this paper establish the combinatorial groundwork for our study of $H^*_S(\B^\lambda)$.  We use row-strict composition tableaux to define an equivariant generalization of the Springer monomial basis in Section~\ref{sec.tableaux}, called the equivariant Springer monomials, and develop the structural properties of these polynomials further in Section~\ref{sec.determinant}.  In particular,  we give a determinant formula for the structure coefficients of $H_S^*(\B^\lambda)$ with respect to the basis of equivariant Springer monomials in Theorem~\ref{T:P_basis_det_formula} of Section~\ref{sec.determinant}.   Finally, Section~\ref{sec.monomial-Schubert} uses the equivariant Springer monomials to study the images of monomials and Schubert polynomials in the cohomology of Springer fibers.  Our main result in Section~\ref{sec.monomial-Schubert} is Theorem~\ref{T:Schubert_Basis}, which was discussed above.   We conclude with an analysis of the geometric meaning of the classes $\phi_0^*(\sigma_w)$ in Section~\ref{sec.geometry}.

 \textit{Acknowledgements.} The authors are grateful to Alex Woo, Jim Carrell, Dave Anderson, Anand Patel, Prakash Belkale, Jeff Mermin, and Vasu Tewari for helpful conversations and feedback.  The first author was supported by a Oklahoma State University CAS summer research grant.  The second author was partially supported by an AWM-NSF travel grant and NSF grant DMS 1954001 during the course of this research.


\section{Background}\label{sec.set-up}

As in the introduction, let $G=GL_n(\C)$ and $\fg=\mathfrak{gl}_n(\C)$ denote its Lie algebra.  Denote by $T$ the maximal torus of diagonal matrices in $G$ and by $B$ the Borel subgroup of upper triangular matrices.  Let $\fb$ denote the Lie algebra of $B$. The Weyl group of $G$ is $W\simeq S_n$.  We let $s_i$ denote the simple transposition exchanging $i$ and $i+1$.  Throughout this manuscript, $\alpha=(\alpha_1,\ldots \alpha_k)$ denotes a (strong) composition of $n$. We call the partition of $n$ obtained by sorting the parts of $\alpha$ into weakly decreasing order the underlying partition shape of $\alpha$.

The composition ${\alpha}$ uniquely determines a standard Levi subgroup $L$ in $G$, namely the subgroup of block diagonal matrices such that the $i$-th diagonal block has dimension ${\alpha_i}\times {\alpha}_i$.  We denote the Weyl group for $L$ by $W_L$.  Let $X_\alpha:\C^n \to\C^n$ be a principal nilpotent element of $\fl$, the Lie algebra of $L$. Note that by construction, $X_\alpha\in \fg$ is a nilpotent matrix of Jordan type $\lambda$, where $\lambda$ is the underlying partition shape of $\alpha$.

Let $\B:=G/B$ denote the flag variety. The  \textbf{Springer fiber} of $X_\alpha$ is defined to be
$$\B^\alpha:=\{gB\in \B \mid g^{-1}X_\alpha g\in  \fb\}.$$
If two compositions have the same underlying partition shape, then the corresponding Springer fibers are isomorphic.  However, taking different compositions corresponding to the same partition shape yields actions of different sub-tori of $T$ on the corresponding Springer fibers.  This ultimately leads to the construction of different bases for the equivariant cohomology ring of $\B^\alpha$.

Let $S$ denote the connected component of the centralizer of $L$ in $G$ containing the identity, so $S\subseteq T$. Since $X_\alpha\in \fl$, we get that $S$ centralizes $X_\alpha$ and therefore $S$ acts on $\B^\alpha$ by left multiplication.   The purpose of this manuscript is to study the equivariant cohomology ring $H_S^*(\B^\alpha)$. We begin by reviewing a presentation for $H_S^*(\B^\alpha)$ due to Kumar and Procesi~\cite{KP12}.

\subsection{A presentation of $H_S^*(\B^\alpha)$}\label{sec.equiv-presentation}

Recall from the introduction that $\phi:\B^\alpha\hookrightarrow \B$ denotes the inclusion map of $\B^\alpha$ into the flag variety and consider the induced map on equivariant cohomology, $\phi^*: H_T^*(\B)\rightarrow H_S^*(\B^\alpha)$.  In this paper, we work with singular and equivariant cohomology with coefficients in $\C$.
Note that $\phi^*$ naturally factors through $H_S^*(\B)$,
\[
\phi^*: H_T^*(\B)\rightarrow H_S^*(\B)\rightarrow H_S^*(\B^\alpha).
\]
Let $\ft$ denote the Lie algebra of $T$.  The coordinate ring of $\ft\times\ft$ is the polynomial ring
\begin{eqnarray}\label{e.polynomial}
\C[\ft \times \ft]\simeq S(\ft^*)\otimes S(\ft^*)\simeq \C[y_1,\ldots,y_n;\, x_1,\ldots x_n]/\mathcal{J}
\end{eqnarray}
where $\mathcal{J}$ is the ideal $\left< \sum_i y_i, \sum_i x_i\right>$.

It is well known that the $T$-action on $\B$ by left multiplication is equivariantly formal, implying
$$H_T^*(\B)\simeq H_T^*(\pt)\otimes_{\C} H^*(\B).$$
Since $H_T^*(\pt)\simeq S(\ft^*)$ we have that $H_T^*(\B)$ is a free $S(\ft^*)$-module. Recall that the {Borel homomorphism},
$$\beta:S(\ft^*)\rightarrow H_T(\B)$$
is defined by $\beta(x_i)=-c_1(\mathcal{L}_i)$, where $c_1(\mathcal{L}_i)$ is the $T$-equivariant first Chern class of  $\mathcal{L}_i$, the $i$-th line bundle of the tautological filtration of sub-bundles on $\B$.  In other words, the fiber of $\mathcal{L}_i$ over a flag $V_\bullet\in \B$ is the line $V_i/V_{i-1}$.  This map induces a surjective algebra homomorphism,
\begin{eqnarray*}\label{e.flagvtymap}
\chi:\C[\ft\times\ft]\twoheadrightarrow H_T^*(\B)
\end{eqnarray*}
given by $\chi(p\otimes q)=p\cdot\beta(q)$ where $p\in S(\ft^*)$.  Following the work the Kumar in Procesi~\cite{KP12}, we define $\theta:\C[\ft \times \ft]\rightarrow H_S^*(\B^\alpha)$ to be the composition of maps
\begin{eqnarray}\label{eqn.theta}
\theta: \C[\ft\times\ft]\xtwoheadrightarrow{\chi} H_T^*(\B)\xtwoheadrightarrow{\phi^*} H_S^*(\B^\alpha).
\end{eqnarray}

Let $\fs\subseteq \ft$ denote the Lie algebra of $S$ and $Z_\alpha$ be the reduced closed subvariety of $\ft\times\ft$ defined by
\[
Z_\alpha:=\{(h,wh)\mid h\in \fs,\, w\in W\}.
\]
Note that we may also view $Z_\alpha$ as a subvariety of $\fs\times \ft\subseteq \ft\times\ft$, and we use this perspective in our computations below.  Let $\ci(Z_\alpha)\subseteq \C[\ft\times \ft]$ denote the vanishing ideal of $Z_\alpha$.  The coordinate ring
\begin{eqnarray}\label{e.coordring}
\ca:=\C[Z_\alpha]\simeq \C[\ft\times\ft]/\ci(Z_\alpha)
\end{eqnarray}
is naturally an $S(\fs^*)$-algebra via the projection $Z_\alpha\rightarrow \fs$ onto the first factor.  Moreover, the ring $\ca$ inherits a non-negatively graded structure from $\C[\ft\times\ft]$.  We also define the graded $\C$-algebra
\begin{eqnarray}\label{e.coordring_0}
\ca_0:=\C\otimes_{S(\fs^*)}\ca
\end{eqnarray}
where $\C$ is considered an $S(\fs^*)$-module under evaluation at 0.  Note that if $\alpha=(1,1,\ldots,1)$, then $\B^\alpha=\B$ and $S=T$.  In this special case, we denote the corresponding coordinate ring by $\ca':=\C[Z_{(1,1,\ldots,1)}]$.  The next theorem from~\cite{KP12} gives a presentation of $H_S^*(\B^\alpha)$.

\begin{thm}[Kumar--Procesi]\label{T:KP_theorem}
The kernel of the map $\theta:\C[\ft\times\ft]\rightarrow H_S^*(\B^\alpha)$ defined in~\eqref{eqn.theta} is the ideal $\ci(Z_\alpha)$.  In particular, $\theta$ induces a graded $S(\fs^*)$-algebra isomorphism
$$\bar{\theta}: \ca \rightarrow H_S^*(\B^\alpha),$$
making the following diagram commute.
\[\begin{tikzcd}
\ca' \arrow[r, "\bar\theta'"] \arrow[d] & H_T^*(\B) \arrow[d, "\phi^*"] \\
\ca \arrow[r, "\bar\theta"]
& H_S^*(\B^\alpha)
\end{tikzcd}\]
Furthermore, the map $\bar{\theta}$ naturally descends to a $\C$-algebra isomorphism:
$$ \bar{\theta}_0:\ca_0 \rightarrow H^*(\B^\alpha)$$
with the following commutative diagram.
\[\begin{tikzcd}
\ca_0' \arrow[r, "\bar\theta'_0"] \arrow[d] & H^*(\B) \arrow[d, "\phi^*_0"] \\
\ca_0 \arrow[r, "\bar\theta_0"]
& H^*(\B^\alpha)
\end{tikzcd}\]

\end{thm}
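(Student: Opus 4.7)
The plan is to leverage equivariant localization: restrict $\theta$ to the $S$-fixed locus $(\B^\alpha)^S$, identify the resulting map with polynomial evaluation on $Z_\alpha$, and then close the argument with a graded dimension count.

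The first step is to verify that the $S$-action on $\B^\alpha$ is equivariantly formal. This follows from the existence of an $S$-stable affine paving of $\B^\alpha$ due to Spaltenstein, so $H_S^*(\B^\alpha)$ is a free $S(\fs^*)$-module and the restriction map $\iota^*: H_S^*(\B^\alpha) \hookrightarrow \bigoplus_{p \in (\B^\alpha)^S} S(\fs^*)$ is injective. Since $X_\alpha$ is regular nilpotent in $\fl$ and $Z_G(S) = L$, the connected components of $\B^S$ are copies of $L/B_L$ (possibly $W$-translated), and each meets $\B^\alpha$ in a single flag---the Springer fiber of a regular nilpotent in $\fl$ is a point. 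Hence $(\B^\alpha)^S$ is a finite set of flags $p_w = wB$ indexed by coset representatives of $W/W_L$. A direct weight computation gives $\iota^*(\theta(y_j \otimes x_i))|_{p_w} = y_j|_\fs \cdot w(x_i)|_\fs$, and extending multiplicatively, $\iota^*(\theta(F))|_{p_w}$ is precisely the evaluation of $F(y,x) \in \C[\ft \times \ft]$ at the point $(h, w(h)) \in Z_\alpha$ with $h \in \fs$. So any $F \in \ci(Z_\alpha)$ kills $\iota^* \circ \theta$ at every fixed point; injectivity of $\iota^*$ then gives $\ci(Z_\alpha) \subseteq \ker\theta$, producing a well-defined graded $S(\fs^*)$-algebra map $\bar\theta : \ca \to H_S^*(\B^\alpha)$.

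To promote this to an isomorphism, I would match graded Hilbert series of $\ca$ and $H_S^*(\B^\alpha)$ as $S(\fs^*)$-modules. The Poincar\'e series of $H_S^*(\B^\alpha)$ is the product of its free $S(\fs^*)$-rank $|(\B^\alpha)^S|$ with the Betti polynomial of $\B^\alpha$, both computable from the paving. The Hilbert series of $\ca$ is computed from the decomposition of $Z_\alpha$ into distinct graphs (parametrized by $W/W_L$, since $W_L$ acts trivially on $\fs$), each an affine space $\simeq \fs$, together with the gluings at their intersections. Equality of the two series forces $\bar\theta$ to be surjective, hence an isomorphism, and commutativity of the first claimed diagram is immediate from construction. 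The non-equivariant statement follows by tensoring with $\C$ over $S(\fs^*)$: equivariant formality gives $H_S^*(\B^\alpha) \otimes_{S(\fs^*)} \C \simeq H^*(\B^\alpha)$, while $\ca \otimes_{S(\fs^*)} \C = \ca_0$ by definition, so the second diagram follows formally.

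The main obstacle is the Hilbert series matching. Injectivity of $\bar\theta$ is essentially formal from localization, but surjectivity---ruling out relations beyond $\ci(Z_\alpha)$---requires a combinatorial identity marrying Betti numbers of $\B^\alpha$ with dimensions of graded pieces of $\C[Z_\alpha]$. A further subtlety is verifying that $Z_\alpha$ as defined is already reduced and that no thickening is needed to recover the image of $\iota^*$; the regularity of $X_\alpha$ in $\fl$ is crucial here, as it forces each component of $\B^S$ to contribute exactly one reduced point to $(\B^\alpha)^S$, hence exactly one graph to the reduced scheme $Z_\alpha$.
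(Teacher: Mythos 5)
The present paper does not prove Theorem~\ref{T:KP_theorem}; it quotes the result from Kumar and Procesi \cite{KP12}, so I am comparing your proposal against that source rather than against any argument in this manuscript.

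Your localization strategy is in essence the one used in \cite{KP12}, and the pieces you carry out are correct: equivariant formality of $\B^\alpha$ from Spaltenstein's $S$-stable paving (cf.~Remark~\ref{rem.free}); the identification of $(\B^\alpha)^S$ with $|W/W_L|$ isolated reduced points, one in each component of $\B^S$, since the Springer fiber of the regular nilpotent restriction within each translate of $L/B_L$ is a single flag; and the fixed-point restriction formula exhibiting $\iota^*\circ\theta$ as evaluation on $Z_\alpha$. Combined with injectivity of $\iota^*$, this gives not merely $\ci(Z_\alpha)\subseteq\ker\theta$ but equality, so $\bar\theta:\ca\to H_S^*(\B^\alpha)$ is a well-defined injective graded $S(\fs^*)$-algebra map.

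The genuine gap is surjectivity of $\bar\theta$. You route this through a ``Hilbert series matching'' that you acknowledge not carrying out, and which in fact requires most of the Tanisaki/Garsia--Procesi machinery to compute the graded dimensions of $\C[Z_\alpha]$ --- this is not a formality. The shorter and standard route is to note that $\theta$ is surjective iff $\phi^*:H_T^*(\B)\to H_S^*(\B^\alpha)$ is surjective (since $\chi$ is onto); that $\phi^*$ factors through the surjection $H_T^*(\B)\twoheadrightarrow H_S^*(\B)$; and that $H_S^*(\B)\to H_S^*(\B^\alpha)$ is surjective because both spaces are $S$-equivariantly formal and the ordinary restriction $H^*(\B)\to H^*(\B^\alpha)$ is already known to be surjective --- this is the classical theorem of De Concini--Procesi \cite{DP81} (see also Tanisaki \cite{Tanisaki1982}) that $H^*(\B^\alpha)$ is generated by Chern classes of the tautological line bundles. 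Lifting a homogeneous $\C$-basis of $H^*(\B^\alpha)$ through the non-equivariant surjection and invoking graded Nakayama then yields surjectivity of $\phi^*$. With this citation inserted, the rest of your argument, including the descent to $\ca_0$ by base change along $S(\fs^*)\to\C$, is sound.
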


Since the map $\bar\theta$ is an isomorphism, we will use $\phi^*,\phi^*_0$ for the respective restriction maps $\phi^*:\ca'\rightarrow\ca$  and $\phi^*_0:\ca'_0\rightarrow\ca_0$.  In particular, if $\mathfrak{S}_w(\mathbf{x})$ denotes a Schubert polynomial representing the class $\sigma_w$, then the polynomial $\phi^*_0(\mathfrak{S}_w(\mathbf{x}))\in \ca_0$ represents the class $\phi^*_0(\sigma_w)\in H^*(\B^\alpha)$.

\begin{rem}\label{rem.free} It is well-known that the cohomology $H^*(\B^\alpha)$ is concentrated in even degrees \cite{Spaltenstein1976}.  Thus  the equivariant cohomology $H_S^*(\B^\alpha)$ is a free $S(\fs^*)$-module, and isomorphic to the tensor product,
\[
H_S^*(\B^\alpha) \simeq H_S^*(\mathrm{pt}) \otimes_\C H^*(\B^\alpha) \simeq S^*(\fs) \otimes_\C H^*(\B^\alpha).
\]
The graded $S(\fs^*)$-algebra isomorphism of Theorem~\ref{T:KP_theorem} implies $\ca$ is a free $S(\fs^*)$-module with rank equal to the number of $S$-fixed points of $\B^\alpha$, namely $|W/W_L|$ (c.f.~\cite[Lemma 2.1]{KP12}).
\end{rem}


\subsection{Maps of polynomial rings} \label{sec.polyset-up}

Recall that $L$ is the standard Levi subgroup associated to $\alpha$ as above and $S = Z_G(L)_0$ is the connected component of the centralizer of $L$ in $G$ containing the identity.
Since $L$ is a standard Levi subgroup, if $\ft=\diag(t_1,\ldots,t_n)\subseteq \fg$ and $\fs$ has coordinates $(z_1,\ldots,z_k)$, then the embedding of the subalgebra $\fs$ into $\ft$ is given by
\[
i:\fs\hookrightarrow \ft,\; (z_1,\ldots,z_k) \mapsto \diag((z_1)^{\alpha_1},\ldots, (z_k)^{\alpha_k})
\]
where
\[
(z_i)^{\alpha_i}:=\underbrace{z_i,\ldots, z_i}_{\alpha_i \text{ times}}.
\]
This embedding induces a map $i^*:\C[\ft\times\ft]\rightarrow\C[\fs\times\ft].$  If $F\in\C[\ft\times\ft]$, then $F$ and $i^*(F)$ have the same values on $Z_\alpha$.  This implies
\begin{eqnarray}\label{eqn.coordinatechange}
\ca:=\C[Z_{\alpha}]\simeq\C[\fs\times\ft]/\ci_{\fs}(Z_{\alpha})\simeq \C[\ft\times\ft]/\ci(Z_{\alpha})
\end{eqnarray}
where $\ci_{\fs}(Z_{\alpha})=i^*(\ci(Z_\alpha))$ denotes the vanishing ideal of $Z_{\alpha}$ as a subvariety of $\fs\times\ft$.  We let $\pi:\C[\fs\times \ft] \to \ca$ denote the canonical projection map.  By a slight abuse of notation, we will also denote the quotient $\C[\ft\times\ft]\to \ca$ by $\pi$; the isomorphism of~\eqref{eqn.coordinatechange} tells us that we may do so without loss of generality.

As in~\eqref{e.polynomial}, there are isomorphisms
$$
\C[\fs\times\ft]\simeq S(\fs^*)\otimes S(\ft^*)\simeq \C[z_1,\ldots,z_k;\, x_1,\ldots x_n]/\cj'
$$
where $\cj'$ is the ideal $\left< \sum_i \alpha_iz_i, \sum_i x_i \right>$.  Note that
$$\C[z_1,\ldots,z_k;\, x_1,\ldots x_n]/\cj'\simeq \C[z_1, \ldots, z_{k-1}; x_1, \ldots, x_{n-1}]$$
and we make this identification below whenever it is convenient (and similarly for $\C[\ft\times \ft]$).

The ring $\ca$ inherits the graded structure of $\C[\fs\times\ft]$. In particular, the degree $k$ component of $\C[\fs\times \ft]$ is  $\bigoplus_{i+j=k} S^i(\fs^*)\otimes S^j(\ft^*)$ and we denote its image under the canonical projection map $\pi: \C[\fs\times \ft]\to \ca$ by $\ca^k$.  Let
$$\ca^k_+:=\pi\left(\bigoplus_{i+j=k,\, i>0} S^i(\fs^*)\otimes S^j(\ft^*)\right)\qquad\text{and}\qquad \ca^k_0:=\pi\left(\C\otimes S^k(\ft^*)\right)$$
denote the positive degree and degree zero components of $\ca^k$ with respect to the grading of $S(\fs^*)$.  It is easy to see that $\ca^k =\ca^k_0\oplus \ca^k_+$ and $\ca_0=\bigoplus_{j\geq 0} \ca^j_0.$  There is a surjective map
\begin{eqnarray*}
\ev:S(\fs^*)\otimes S(\ft^*) \rightarrow S(\ft^*)
\end{eqnarray*}
given by evaluation at $0$.  More explicitly, if $F=p\otimes q$ with $p\in S(\fs^*)$ and $q\in S(\ft^*)$, then $\ev(F):=p(0)\cdot q$ (then extend linearly to all of $S(\fs^*)\otimes S(\ft^*)$).  This map induces an evaluation map $\ev:\ca\rightarrow\ca_0$ giving the commutative diagram:
\begin{eqnarray}\label{eqn.cd}
\begin{tikzcd}
\C[\fs\times\ft] \arrow[r, "\pi"] \arrow[d, "\ev"] & \ca \arrow[d, "\ev"] \\
\C[\ft] \arrow[r, "\pi_0"]
& \ca_0
\end{tikzcd}
\end{eqnarray}
where, by Theorem~\ref{T:KP_theorem}, $\ca \simeq H_S^*(\B^\alpha)$ and $\ca_0 \simeq H^*(\B^\alpha)$.  Note that, under these identifications, the evaluation map is simply the usual restriction map from equivariant to ordinary cohomology.  The following lemma relates bases of the ordinary and equivariant cohomology rings of $\B^\alpha$.




\begin{lem}\label{P:equiv_basis}
Suppose $b_1,\ldots,b_m$ is a homogeneous basis of $\ca_0$ and let $B_1,\ldots, B_m$ be a set of homogeneous polynomials in $\ca$ such that $\ev(B_i)=b_i$.  Then $\{B_1,\ldots,B_m\}$ is an $S(\fs^*)$-module basis of $\ca$.
\end{lem}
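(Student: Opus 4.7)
The plan is a graded Nakayama-style argument using two inputs already available. From Remark~\ref{rem.free}, $\ca$ is a free $S(\fs^*)$-module of rank $m=|W/W_L|$, which matches the cardinality of the basis $b_1,\ldots,b_m$ of $\ca_0$. From the definition $\ca_0=\C\otimes_{S(\fs^*)}\ca$, the kernel of $\ev:\ca\to\ca_0$ is exactly $S(\fs^*)_+\cdot\ca$, where $S(\fs^*)_+$ is the augmentation ideal (generated in positive degree by the images of the $z_j$). Note also that since the $B_i$ are homogeneous and $\ev$ preserves degree, $\deg B_i=\deg b_i$ for all $i$.

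First I would show $\{B_1,\ldots,B_m\}$ spans $\ca$ as an $S(\fs^*)$-module by induction on the degree of a homogeneous element $F\in\ca^d$. The base case $d=0$ is immediate: $\ca^0=\C$, the map $\ev$ restricts to the identity there, and writing $F=\sum c_i b_i$ over those $i$ with $\deg b_i=0$ gives $F=\sum c_i B_i$. For the inductive step, decompose $\ev(F)=\sum c_i b_i$ over $i$ with $\deg b_i=d$, set $G:=\sum c_i B_i\in\ca^d$, and observe $\ev(F-G)=0$, so $F-G\in S(\fs^*)_+\cdot\ca$. Writing $F-G=\sum_j z_j H_j$ with each $H_j\in\ca^{d-1}$ homogeneous, the inductive hypothesis expresses each $H_j$ as an $S(\fs^*)$-linear combination of the $B_i$'s; hence so does $F$.

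To upgrade spanning to a basis, I would invoke the standard fact that a surjective endomorphism of a finitely generated module over a commutative ring is automatically injective (a consequence of Nakayama's lemma). The spanning statement produces a surjection $S(\fs^*)^m\twoheadrightarrow\ca$, and since $\ca\simeq S(\fs^*)^m$ as $S(\fs^*)$-modules by Remark~\ref{rem.free}, this is a surjective endomorphism of $S(\fs^*)^m$, hence an isomorphism. Therefore $\{B_1,\ldots,B_m\}$ is an $S(\fs^*)$-module basis of $\ca$.

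The only potentially delicate point is the inductive step itself: one needs the $H_j$ to be chosen homogeneous of strictly smaller degree than $F$. This is routine bookkeeping because the generators $z_j$ of $S(\fs^*)_+$ are homogeneous of positive degree and $\ca$ carries a compatible non-negative grading, so the graded piece of $S(\fs^*)_+\cdot\ca$ in degree $d$ is spanned by products $z_j H_j$ with $H_j\in\ca^{d-1}$. No other obstacles arise; all the substantive content is encoded in the freeness of $\ca$ from Remark~\ref{rem.free} together with the identification $\ker(\ev)=S(\fs^*)_+\cdot\ca$.
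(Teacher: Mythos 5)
Your proof is correct, and it does differ from the paper's in a substantive way. For the spanning step, the two arguments are essentially the same induction on degree: both reduce the problem by subtracting $\sum c_i B_i$ from $F$ and observing the remainder lives in lower degree (modulo $S(\fs^*)_+$). Your bookkeeping via $\ker(\ev)=S(\fs^*)_+\cdot\ca$ and the explicit $z_j$-factorization is a clean alternative to the paper's decomposition $\ca^k=\ca^k_0\oplus\ca^k_+$ and its observation that elements of $\ca^k_+$ are sums $p'\otimes q'$ with $q'$ of degree $<k$; both hinge on the same fact that the discrepancy $F-\sum c_iB_i$ has strictly positive $S(\fs^*)$-degree. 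For linear independence, however, the approaches genuinely diverge. The paper extends scalars to the fraction field $Q(\fs^*)$ and counts dimensions: a free module of rank $m$ extends to an $m$-dimensional vector space, so the $m$ spanning elements must be independent. You instead invoke the surjective-implies-injective theorem for endomorphisms of finitely generated modules over a commutative ring (the Cayley--Hamilton/Nakayama argument), applied to $S(\fs^*)^m\twoheadrightarrow\ca\simeq S(\fs^*)^m$. Both routes rest on the rank identification $m=|W/W_L|$ from Remark~\ref{rem.free}. Your version is perhaps slightly more abstract and compact, at the cost of citing a less elementary module-theoretic fact; the paper's fraction-field argument is more self-contained linear algebra. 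Either is a legitimate way to close the proof.
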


\begin{proof} To begin, we prove that the $S(\fs^*)$-span of $B_1, \ldots, B_m$ is $\ca$. First note that if $\deg(b_i)=\deg(B_i)=k$, then $b_i\in \ca_0^k$ and $B_i=b_i+G_i$ for some $G_i\in \ca^k_+$.   Let $F\in \ca$.
It suffices to assume that $F\in \ca^k$ for some $k$.  We proceed by induction on $k$.
Since $b_1,\ldots,b_m$ is a basis of $\ca_0=\ev(\ca)$, we can write
$$\ev(F)=\sum_{i=1}^m c_i\, b_i$$
for some $c_1, \ldots, c_m\in \C$ and hence $F=\sum_{i=1}^m c_i\, b_i +G$ for some $G\in \ca^k_+$ and $\deg(b_i)=k$ for all $c_i\neq 0$.  We now have
\begin{align*}
F&=\sum_{i=1}^m c_i\, b_i +G\\
&=\left(\sum_{i=1}^m c_i\, b_i +\sum_{i=1}^m c_i\, G_i\right) +\left(G-\sum_{i=1}^m c_i\, G_i\right)\\
&=\left(\sum_{i=1}^m c_i\, B_i \right) +\left(G-\sum_{i=1}^m c_i\, G_i\right)\\
\end{align*}
Observe that the second term of the above sum belongs to $\ca^k_+$ and is therefore of the form $\sum p'\otimes q'$ where each $q'\in S^j(\ft^*)$ from some $j<k$.  By induction, each $q'$ is a $S(\fs^*)$-linear combination of $B_1,\ldots,B_m$ and hence so is $F$.

We next prove that $\{B_1, \ldots, B_m\}$ is $S(\fs^*)$-linearly independent.  As noted in Remark~\ref{rem.free}, $\ca$ is a free $S(\fs^*)$-module of rank $m=\dim H^*(\B^\alpha)$.  Let $Q(\fs^*)$ denote the field of fractions of $S(\fs^*)\simeq \C[z_1, \ldots, z_{k-1}]$.  Since $\ca$ is a free module, the extension of scalars $Q(\fs^*)\otimes_{S(\fs^*)} \ca$ is a free module of the same rank \cite[\S10.4, Cor.~18]{DF}.  Furthermore, the polynomials $B_1, \ldots, B_m$ must also span $Q(\fs^*)\otimes_{S(\fs^*)} \ca$.   Since the extension of scalars is an $m$-dimensional vector space, $\{B_1, \ldots, B_m\}$ are $Q(\fs^*)$-linearly independent.  Any non-trivial linear relation among $B_1, \ldots, B_m$ with $S(\fs^*)$-coefficients would also be a non-trivial linear relation over $Q(\fs^*)$, contradicting the previous sentence. We conclude $\{B_1, \ldots, B_m\}$ is $S(\fs^*)$-linearly independent, as desired.
\end{proof}


\subsection{The Springer monomial basis}

We now recall the monomial basis of $\ca_0 \simeq H^*(\B^\alpha)$ defined by De Concini and Procesi in~\cite{DP81} and further analyzed by Garsia and Procesi in~\cite{GP92}.

Let $\lambda$ be a partition of $n$ with $k$ parts  and $\lambda[i]$ be the partition of $n-1$ obtained from $\lambda$ by decreasing the $i$-th part by $1$ and sorting the resulting composition so that the parts are in weakly decreasing order.  Define $\GP_\lambda' \subset \C[\ft]$ to be the collection of monomials constructed recursively as in~\cite[\S1]{GP92} by
\begin{eqnarray}\label{eqn.GPbasis}
\GP_\lambda' = \bigsqcup_{1\leq i \leq k} x_n^{i-1} \GP_{\lambda[i]}',
\end{eqnarray}
with initial condition $\GP_\lambda'=\{1\}$ for $\lambda=(1)$.  Here $x_n^{i-1}\GP_{\lambda[i]}'$ denotes the set of monomials obtained by multiplying each monomial in $\GP_{\lambda[i]}'$ by $x_n^{i-1}$.  Observe that as defined in \cite{GP92}, the monomials in $\GP_\lambda'$ are in the variables $x_2,\ldots,x_n$.  We define
$$\GP_\lambda:=w_0\GP_\lambda'$$ where the action of the longest permutation $w_0\in W$ on variables is given by $w_0\cdot x_i:=x_{n-i+1}$.  Hence the monomials in $\GP_\lambda$ are in the variables $x_1,\ldots,x_{n-1}$.  Since the ideal $\ci(Z_\alpha)$ is invariant under the action of $W$, it follows by results of De Concini and Procesi that, as graded vector spaces,
\[
\ca_0\simeq \bigoplus_{\bfx^\delta\in\GP_\lambda} \C\cdot \bfx^\delta.
\]
Here we use standard monomial notation $\bfx^{\delta}:=x_1^{\delta_1}x_2^{\delta_2}\cdots x_{n-1}^{\delta_{n-1}}\in \C[\ft]\simeq\C[x_1, \ldots, x_{n-1}]$. We refer to the basis $\GP_\lambda$ of $H^*(\B^\alpha)$ as the \textbf{Springer monomial basis}, and to its elements as \textbf{Springer monomials}. We adopt the convention throughout this manuscript that if $\mathbf{x}^\delta\in \GP_\lambda$, then we denote both $\mathbf{x}^\delta\in \C[\ft]$ and its image under the canonical projection $\pi_0: \C[\ft]\to \ca_0$ by the same symbol.
\begin{example}
Let $n=4$ and $\lambda=(2,2)$, then $\GP_{(2,2)}=\{1, x_3, x_2, x_2x_3, x_1, x_1x_3, x_1x_2\}$
\end{example}
See~\cite[\S1]{GP92} for a more detailed example.

\begin{rem}
The Springer monomials have been generalized to study the cohomology rings of other subvarieties of the flag variety.  In particular, Mbirika in \cite{Mb10} constructs an analogous set of monomials for nilpotent Hessenberg varieties (which include Springer fibers).  In a later paper, Mbirika and Tymoczko give an analogue of the Tanisaki ideal in the Hessenberg setting \cite{MT13}.
\end{rem}


\section{Row-strict Tableaux}\label{sec.tableaux}
In this section we develop a combinatorial framework to study the ring $\ca$ defined in~\eqref{e.coordring} using row-strict composition tableaux.

\subsection{Row strict composition tableaux}
For any integers $p\leq q$, we let $[p,q]$ denote the interval $[p,q]:=\{p,p+1,\ldots, q\}$.  If $p=1$, then we set $[q]:=[1,q]$.  Given any $m\leq n$, consider $\beta=(\beta_1,\ldots,\beta_k)$ a weak composition of $m$.  The composition diagram of $\beta$ is an array of boxes with $\beta_i$ boxes in the $i$-th row ordered from top to bottom (English notation).

A \textbf{shifted row-strict composition tableau} of shape $\beta$ is a labeling $\tabone$ of the composition diagram with the $m$ integers $[n-m+1,n]$ such that the values decrease from left to right in each row.  For simplicity of notation, let $\bar m:=n-m+1$.  Let $\RSCT_n(\beta)$ denote the collection of all shifted-standard row-strict tableaux of composition shape $\beta$ with content $[\bar m,n]$. Observe that if $\bar m=1$ (i.e. $\beta$ is a composition of $n$), then the content of $\beta$ is the full standard content $[n]$; in this case, we say that $\beta$ is a \textbf{row-strict composition tableau}.

\begin{example} Consider the composition $\beta=(1,2,0,1)$ with $n=5$.  In this case $m=4$ and $\bar{m} = 2$.  There are 12 row-strict composition tableaux in $\RSCT_4(\beta)$.  Indeed, note that there are $24=4!$ possible fillings of $\beta$ using the content $[2,5]$.  Furthermore, if we define two fillings to be equivalent up to the entries in each row, e.g.
\[\ytableausetup{centertableaux}
\begin{ytableau} 3\\ 4 & 2\\ \none \\ 5  \end{ytableau} \quad \sim  \quad \begin{ytableau} 3\\ 2 & 4\\ \none \\ 5  \end{ytableau}
\]
then there are precisely two tableaux in each equivalence class and each class contains a unique row-strict composition tableau.
\end{example}

Given a composition $\beta$, let $\alpha=(\alpha_1,\ldots, \alpha_k)$ be the strong composition obtained from $\beta$ by deleting any part equal to zero.  By similar reasoning as in the example above, we have that
$$|\RSCT_n(\beta)|= \frac{n!}{\alpha_1!\cdots \alpha_k!}$$
which is precisely the number of $S$-fixed points in the Springer fiber $\B^\alpha$.  Notice that if $\bar{m}>1$, then each shifted row-strict composition tableau can be associated to a unique row-strict tableau in $\RSCT_{m}(\beta)$ by the relabeling map $i\mapsto i-\bar{m}+1$.  We use the ``shifted'' terminology since it simplifies the arguments below.  Similarly, although we typically begin with a strong composition of $n$, our inductive procedures require the generality of weak compositions.

We now define a map
\begin{eqnarray}\label{eqn.induction}
\map:\RSCT_n(\beta)\rightarrow \bigsqcup_{\beta'}\RSCT_{n}(\beta')
\end{eqnarray}
where the union on the RHS is taken over all compositions $\beta'$ obtained from $\beta$ by deleting one box from any nonzero row.  Let $\map(\tabone)$ be the composition tableau obtained by removing the box from $\tabone$ which contains its smallest entry, namely $\bar m$.  For example:
\[\ytableausetup{centertableaux}
\begin{ytableau} 7 & 4\\ 6 & 1  \\5 & 3 & 2\\ \end{ytableau}\quad \xmapsto[\hspace*{1cm}]{\map}\quad  \begin{ytableau} 7 & 4\\ 6   \\5 & 3 & 2\\ \end{ytableau}
\]
In this case, the disjoint union in~\eqref{eqn.induction} is taken over $\beta'\in \{(1,2,3), (2,1,3), (2,2,2)\}$.  The map $\map$ plays an important role in the inductive arguments below; note that $\map$ is in fact a bijection.

\begin{defn} \label{defn.inv} Let $\tabone\in\RSCT_n(\beta)$.  We say that $(i,j)$ is an \textbf{Springer inversion} of $\tabone$ if there exists $j'$ in row $j$ such that $i<j'$ and either:
\begin{enumerate}
\item $j'$ appears above $i$ and in the same column, or
\item $j'$ appears in a column strictly to the right of the column containing $i$.
\end{enumerate}
Denote the set of Springer inversions of $\tabone$ by $\Inv(\tabone)$.
\end{defn}

\begin{example}\label{Ex:inversions}
Let $n=9$ and $\beta=(2,0,3,2,1)$. Consider $\tabone\in\RSCT_9(\beta)$ with content $[2,9]$:
\[\begin{ytableau}
6 & 3\\
\none\\
8 & 7 & 4 \\
5 & 2\\
9
\end{ytableau}\]
The inversions of $\tabone$ are $\Inv(\tabone)=\{(2,1), (2,3), (5,1), (5,3),(6,3)\}$.
\end{example}

\begin{rem}\label{rem.conventions} Note that the definition above is closely related to the notion of a \textit{Springer dimension pair} considered by the first author and Tymoczko in~\cite{PT19}.  In that paper, the convention is that the row-strict tableaux have \textit{increasing} entries (from left to right), while our convention is that the entries are \textit{decreasing} (from left to right).  This change in conventions is routine; to convert from one to the other, apply the permutation $w_0$ such that $w_0(i)= n-i+1$ for all $i$.  A Springer inversion from this paper corresponds to a unique Springer dimension pair as defined in~\cite{PT19} (up to transformation under $w_0$).  If $(i,j)$ is a Springer inversion then $(n-i+1,n-j'+1)$ is a Springer dimension pair, where $j'$ denotes the smallest element in row $j$ such that $i<j'$.
\end{rem}

The following lemma is a simple, but important fact about inversions.

\begin{lem}\label{lemma.inv}
Let $\tabone,\tabtwo\in \RSCT_n(\beta)$.  Let $j_\tabone,j_\tabtwo$ denote the indices of the rows containing $\bar m$ in $\tabone$ and $\tabtwo$, respectively.  Then exactly one of the following is true:
\begin{enumerate}
\item $(\bar m,j_\tabtwo)\in\Inv(\tabone)$
\item $(\bar m,j_\tabone)\in\Inv(\tabtwo)$
\item $j_\tabone=j_\tabtwo$.
\end{enumerate}
\end{lem}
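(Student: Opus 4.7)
The plan is to reduce the statement to a case analysis based on the relationship between the row lengths $\beta_{j_\tabone}$ and $\beta_{j_\tabtwo}$. The key preliminary observation is that $\bar{m}$ is the smallest entry of both tableaux, so by row-strictness $\bar{m}$ must occupy the \emph{rightmost} cell of its row in each tableau; explicitly, $\bar{m}$ sits at column $\beta_{j_\tabone}$ in $\tabone$ and column $\beta_{j_\tabtwo}$ in $\tabtwo$. A second, equally useful observation is that every other entry $j'$ of the tableau automatically exceeds $\bar{m}$, so the inequality $i<j'$ in Definition~\ref{defn.inv} is free when $i=\bar{m}$. This reduces each of conditions (1) and (2) of the lemma to a purely geometric check on row $j_\tabtwo$ (resp.\ $j_\tabone$) of $\tabone$ (resp.\ $\tabtwo$).

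With these reductions I would first dispose of the case $j_\tabone=j_\tabtwo$, where conclusion (3) holds. Here conditions (1) and (2) become the same statement and ask whether row $j_\tabone$ of $\tabone$ has an entry above $\bar m$ in the same column or strictly to its right. But $\bar m$ is rightmost in its row, so every other entry of the row lies strictly to its left in the same row, hence neither above nor right of $\bar m$. So (1) and (2) both fail.

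The remaining case $j_\tabone\neq j_\tabtwo$ (in which (3) fails) splits on whether $\beta_{j_\tabone}$ is less than, greater than, or equal to $\beta_{j_\tabtwo}$. If $\beta_{j_\tabtwo}>\beta_{j_\tabone}$, row $j_\tabtwo$ of $\tabone$ extends past the column of $\bar m$ and supplies a witness for (1); meanwhile row $j_\tabone$ of $\tabtwo$ is too short to reach the column of $\bar m$ in $\tabtwo$, so (2) fails. The opposite case $\beta_{j_\tabone}>\beta_{j_\tabtwo}$ is symmetric. In the boundary sub-case $\beta_{j_\tabone}=\beta_{j_\tabtwo}$, neither row extends past the column of $\bar m$ in the other tableau, so any witness must come from the ``same-column, strictly above'' clause of Definition~\ref{defn.inv}. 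Such a witness exists in $\tabone$ iff $j_\tabtwo<j_\tabone$ and in $\tabtwo$ iff $j_\tabone<j_\tabtwo$, and since $j_\tabone\neq j_\tabtwo$ exactly one of these holds.

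The proof is essentially bookkeeping once the two reductions above are in place; I do not anticipate a genuine obstacle. The only point requiring care is the asymmetry in Definition~\ref{defn.inv}, where ``same column'' demands $j'$ be strictly above $i$ while ``different column'' demands strictly to the right. It is exactly this asymmetry that, in the equal-row-length sub-case, pins down which of (1) or (2) is satisfied in terms of the sign of $j_\tabone-j_\tabtwo$.
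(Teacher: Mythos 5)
Your proof is correct and follows the same strategy as the paper's: observe that $\bar m$, being the minimum of the content, sits at the rightmost cell of its row and makes the inequality $i<j'$ automatic, then case on the relative lengths of rows $j_\tabone$ and $j_\tabtwo$. Your handling of the tie $\beta_{j_\tabone}=\beta_{j_\tabtwo}$ — where the only possible witness is the same-column, strictly-above clause, so the sign of $j_\tabone-j_\tabtwo$ decides between (1) and (2) — is actually more careful than the paper's one-line claim that $\beta_{j_\tabtwo}\geq\beta_{j_\tabone}$ forces $(\bar m, j_\tabtwo)\in\Inv(\tabone)$, which misassigns the equality case (your resolution is the one consistent with the paper's own formula~\eqref{eqn.Inv_m}).
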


\begin{proof}
Without loss of generality, suppose that $j_\tabone< j_\tabtwo$ and hence $\bar m$ is contained in different rows of the tableaux $\tabone$ and $\tabtwo$.  Since $\bar m$ is the smallest number in the content, it must lie at the end its respective row of $\tabone$ and $\tabtwo$.  Moreover, the content of the row indexed by $j_\tabtwo$ in $\tabone$ is strictly larger than $\bar m$ and vice versa.
If the size of row $j_\tabtwo$ is at least the size of row $j_\tabone$, then $(\bar m, j_\tabtwo)\in \Inv(\tabone)$.  Otherwise, $(\bar m, j_\tabone)\in \Inv(\tabtwo)$.
\end{proof}

Lemma~\ref{lemma.inv} induces a total ordering on the set $\RSCT_n(\beta)$ as follows.

\begin{defn}\label{D:Total_ordering}
Let $\tabone,\tabtwo\in \RSCT_n(\beta)$ and $j_\tabone,j_\tabtwo$ denote the indices of the rows containing $\bar m$ in $\tabone$ and $\tabtwo$, respectively.  First suppose $j_\tabone\neq j_\tabtwo$.  We say $\tabtwo<\tabone$ if $(\bar m, j_\tabtwo)\in\Inv(\tabone)$ and $\tabone <\tabtwo$ if $(\bar{m}, j_\tabone)\in \Inv(\tabtwo)$.  Otherwise, if $j_\tabone=j_\tabtwo$, then $\map(\tabone)$ and $\map(\tabtwo)$ have the same composition shape.  In this case, we inductively say $\tabtwo<\tabone$ if $\map(\tabtwo)<\map(\tabone)$.
\end{defn}

\begin{example}\label{ex.total_order}  Let $n=4$ and $\alpha=(2,2)$.  The total order on tableaux in $\RSCT_4(\alpha)$ is displayed below.
\[\ytableausetup{centertableaux}
\begin{ytableau} 3 & 1\\ 4 & 2\\ \end{ytableau}\,<\,
\begin{ytableau} 4 & 1\\ 3 & 2\\ \end{ytableau}\,<\,
\begin{ytableau} 2 & 1\\ 4 & 3\\ \end{ytableau}\,<\,
\begin{ytableau} 3 & 2\\ 4 & 1\\ \end{ytableau}\,<\,
\begin{ytableau} 4 & 2\\ 3 & 1\\ \end{ytableau}\,<\,
\begin{ytableau} 4 & 3\\ 2 & 1\\ \end{ytableau}
\]
\end{example}

In the next section we will associate a unique monomial to each element of $\RSCT_n(\alpha)$. We will see that the total ordering on the shifted row-strict composition tableaux defined above corresponds to the lex ordering on these monomials.


\subsection{Equivariant Springer monomials} In this section we define a collection of polynomials indexed by row-strict composition tableaux. The main purpose of defining these polynomials is to provide a combinatorial framework to study the cohomology ring $H^*_S(\B^\alpha)\simeq \ca$ in the following sections.  Indeed, the polynomials defined below will serve as an equivariant generalization of the Springer monomial basis.

\begin{defn}\label{def.p-poly}
Let $\beta$ be a composition of $m\leq n$ and  $\tabone\in\RSCT_n(\beta)$.  If $\Inv(\tabone)\neq \emptyset$, let $P_\tabone \in \C[\fs\times \ft]$ be the polynomial of degree $|\Inv(\tabone)|$ defined by,
$$P_\tabone( \mathbf{z}, \mathbf{x}):=\prod_{(i,j)\in\Inv(\tabone)} (x_i-z_j).$$
If the inversion set of $\tabone$ is empty, then define $P_\tabone = 1$. We call the collection of polynomials obtained in this way \textbf{equivariant Springer monomials}.
\end{defn}

While the $P_\tabone$ are not monomials in the traditional sense, we use the term ``monomial" since $P_\tabone$ is a product of equivariant factors $(x_i-z_j)$, a common generalization of monomials in ordinary cohomology.  We adopt the convention throughout this manuscript that each equivariant Springer monomial $P_\tabone\in \C[\fs\times \ft]$ and its image under the canonical projection map $\pi: \C[\fs\times \ft]\to \ca$ are denoted by the same symbol.  This greatly simplifies the notation below.

\begin{example}\label{ex.P-poly-short} Let $n=9$, $\beta=(2,0,3,2,1)$, and $\tabone$ as in Example~\ref{Ex:inversions}.  Then
\[
P_\tabone ( \mathbf{z}, \mathbf{x}) = (x_2-z_1)(x_2-z_3)(x_5-z_1)(x_5-z_3)(x_6-z_3).
\]
\end{example}

There is a simple inductive description of the equivariant Springer monomials, as explained in the next two paragraphs.   Suppose $\bar m$ labels a box in row $j_\tabone$ of $\tabone$ and recall that $\bar m$ must label the last box in row $j_\tabone$.  Since $\bar{m}$ is the smallest label that appears, we have
\begin{eqnarray}\label{eqn.Inv_m}
\Inv(\tabone)\cap (\{\bar{m}\}\times [k]) = \{(\bar{m}, j) \mid \beta_j>\beta_{j_\tabone} \textup{ or } \beta_j=\beta_{j_\tabone} \textup{ and } j<j_\tabone \}.
\end{eqnarray}
Denote this set by $\Inv_{\bar m}(\tabone)$.  Note in particular that $|\Inv_{\bar{m}}(\tabone)|$ is uniquely determined by the value of $j_\tabone$.

Recall the map $\map$ from~\eqref{eqn.induction}  defined by deleting the box labeled by $\bar{m}$ in $\tabone$. The Springer inversions of $\tabone$ decompose as
\begin{eqnarray*}
\Inv(\tabone) = \Inv_{\bar m} (\tabone) \sqcup \Inv(\map(\tabone)).
\end{eqnarray*}
We obtain a corresponding decomposition formula for the polynomial $P_\tabone$ given by
\begin{eqnarray}\label{eqn.Pdecomp}
P_\tabone(\mathbf{z}, \mathbf{x}) = Q_\tabone(\mathbf{z}, \mathbf{x}) P_{\map(\tabone)}(\mathbf{z}, \mathbf{x})
\end{eqnarray}
where
\[
Q_\tabone(\mathbf{z}, \mathbf{x}) := \prod_{(\bar m, j)\in \Inv_{\bar m}(\tabone)} (x_{\bar m} - z_j)
\]
if $\Inv_{\bar{m}}(\tabone)\neq \emptyset$ and $Q_\tabone = 1$ otherwise.  The next lemma shows that the decomposition formula for the polynomials $P_\tabone$ from~\eqref{eqn.Pdecomp} is compatible the recursive formula defining the Springer monomials given in equation~\eqref{eqn.GPbasis}.

\begin{lem}\label{lem.GPbasis} Let $\beta$ be a composition of $m\leq n$ and $\lambda$ denote its underlying partition shape. Then,
\begin{equation}\label{Eq:eval_to_GPbasis}
\{\ev(P_\tabone)\mid \tabone\in \RSCT_n(\beta)\}=\GP_\lambda.
\end{equation}
In particular, the set $\{\ev(P_\tabone)\mid \tabone\in \RSCT_n(\beta)\}$ only depends on $\lambda$, the underlying partition shape of $\beta$.
\end{lem}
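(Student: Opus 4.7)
The plan is to induct on $m = |\beta|$. In the base case $m=1$, $\beta$ has a single box holding the value $n$, so $\Inv(\tabone)=\emptyset$, $\ev(P_\tabone)=1$, and the claim matches $\GP_{(1)}=\{1\}$.

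For the inductive step, I would apply $\ev$ to the decomposition~\eqref{eqn.Pdecomp} to get
\[
\ev(P_\tabone) = x_{\bar m}^{\,r_{j_\tabone}}\cdot \ev(P_{\map(\tabone)}),
\]
where $r_j := |\Inv_{\bar m}(\tabone)|$ for $j=j_\tabone$. By~\eqref{eqn.Inv_m}, $r_j$ depends only on $j$ and $\beta$. Partitioning $\RSCT_n(\beta)$ according to $j_\tabone$ and invoking that $\map$ restricts to a bijection onto $\RSCT_n(\beta[j])$---where $\beta[j]$ denotes $\beta$ with its $j$-th part reduced by one---the inductive hypothesis applied to $\beta[j]$ (a composition of $m-1$ with underlying partition $\lambda_j$) yields
\[
\{\ev(P_\tabone)\mid \tabone\in \RSCT_n(\beta)\} \;=\; \bigsqcup_{j:\beta_j>0} x_{\bar m}^{\,r_j}\cdot \GP_{\lambda_j}.
\]

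Next I would reindex this union by the permutation $\sigma$ that sorts the nonzero rows of $\beta$ so that $\beta_{\sigma(1)}\geq \cdots\geq \beta_{\sigma(k)}$, with ties broken by smaller row index first. A direct check from~\eqref{eqn.Inv_m} shows $r_{\sigma(i)}=i-1$ and $\lambda_{\sigma(i)}=\lambda[i]$, the partition obtained from $\lambda$ by decreasing the $i$-th part by one and resorting. The union becomes
\[
\bigsqcup_{i=1}^{k} x_{\bar m}^{\,i-1}\, \GP_{\lambda[i]}.
\]
This matches $\GP_\lambda$: the Garsia--Procesi recursion $\GP_\lambda' = \bigsqcup_i x_m^{i-1}\, \GP_{\lambda[i]}'$ together with applying the fixed $w_0 \in S_n$ (which sends $x_m$ to $x_{\bar m}$) yields $\GP_\lambda = \bigsqcup_i x_{\bar m}^{\,i-1}\, \GP_{\lambda[i]}$, as required.

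The main obstacle is the combinatorial bookkeeping that links the row-order on $\beta$ (as defined implicitly by~\eqref{eqn.Inv_m}) to the Garsia--Procesi indexing of $\lambda[i]$; once $\sigma$ is in place, the two identities $r_{\sigma(i)}=i-1$ and $\lambda_{\sigma(i)}=\lambda[i]$ are straightforward. Disjointness of the union is automatic because $\GP_{\lambda[i]}$ only involves the variables $x_{\bar m+1},\ldots, x_{n-1}$, so distinct values of $i$ contribute distinct $x_{\bar m}$-degrees. The final ``only depends on $\lambda$'' assertion then follows immediately since $\GP_\lambda$ depends only on $\lambda$.
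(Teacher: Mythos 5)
Your proof is correct and follows essentially the same strategy as the paper's: induct on $m$, use the decomposition $\ev(P_\tabone) = x_{\bar m}^{\,|\Inv_{\bar m}(\tabone)|}\ev(P_{\map(\tabone)})$ from~\eqref{eqn.Pdecomp}, invoke the bijectivity of $\map$, and reindex via the sorting permutation to match the Garsia--Procesi recursion~\eqref{eqn.GPbasis}. The only cosmetic differences are that your $\sigma$ is the inverse of the paper's $\sigma$, and that you handle weak compositions directly (sorting only the nonzero rows) whereas the paper first reduces to strong compositions by upward-justifying; both are equally valid.
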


\begin{proof} First observe that if $\beta=(\beta_1, \ldots, \beta_k)$ is a weak composition of $m\leq n$, then $\beta$ determines a unique strong composition $\tilde\beta$ obtained by deleting the parts of $\beta$ equal to 0.  If $\tabone\in \RSCT_n(\beta)$, then one obtains a unique element $\tabone' \in \RSCT_n(\beta')$ by upward justifying all rows.  It is easy to see from the definitions that $\beta$ and $\beta'$ have the same number of Springer inversions and that $\ev(P_\tabone)=\ev(P_\tabone')$.  Hence we may assume without loss of generality that $\beta$ is a strong composition of $m$.

We now proceed by (reverse) induction on $\bar{m}$, the smallest value appearing in any $\tabone\in \RSCT_n(\beta)$.  If $\bar{m}=n$ then $m=1$ and $\lambda=(1)$.  In this case, $\RSCT_n(\beta)$ contains a single element, namely the row-strict composition tableau consisting of a single box labeled by $1$. Therefore $\Inv(\tabone)=\emptyset$ and $\GP_\lambda=\{1\} = \{\ev(1)\}$, as desired.

Now suppose $\bar{m}<n$ and $\beta=(\beta_1, \ldots, \beta_k)$ has $k$ non-zero parts. Let $\sigma^{-1}$ denote the unique minimal length permutation of $k$ such that $\lambda=(\beta_{\sigma^{-1}(1)}, \ldots, \beta_{\sigma^{-1}(k)})$.  In other words, $\sigma(i)-1$ is equal to the number of $j\in [m]$ such that $\beta_j>\beta_i$ plus the number of $j\in [m]$ such that $\beta_j=\beta_i$ and $j<i$.  Combining this notation with~\eqref{eqn.Inv_m} and~\eqref{eqn.Pdecomp} implies that if $\tabone\in \RSCT_n(\beta)$ with $j_\tabone=i$ then $\ev(P_\tabone) = x_{\bar{m}}^{\sigma(i)-1} \ev(P_{\map(\tabone)})$.

Let $\beta[i]$ be the composition of $m-1$ obtained from $\beta$ by decreasing $\beta_i$ by $1$.  Note that $\tabtwo\in \RSCT_n(\beta[i])$ has content $[\bar{m}+1,n]$.  Since the map $\map$ from~\eqref{eqn.induction} is a bijection, the decomposition of $P_\tabone$ given in~\eqref{eqn.Pdecomp} now gives us,
\begin{eqnarray}\label{eqn.poly_decomp}
\{\ev(P_\tabone)\mid \tabone\in \RSCT_n(\beta)\} = \bigsqcup_{1\leq i \leq k} \{x_{\bar{m}}^{\sigma(i)-1}\ev(P_{\tabtwo})\mid \tabtwo\in \RSCT_n(\beta[i]) \}.
\end{eqnarray}
By the induction hypothesis, $\{\ev(P_{\tabtwo})\mid \tabtwo\in \RSCT_n(\beta[i])\} = \GP_{\lambda[\sigma(i)]}$ and our claim now follows directly from the recursive definition of $\GP_\lambda$ given in~\eqref{eqn.GPbasis}.
\end{proof}

\begin{example}
Consider the following tableaux in $\RSCT_6(\beta)$ for $\beta = (3,2,1)$ and $\beta = (2,1,3)$, respectively.
\[\tabone=\begin{ytableau}
4 & 3 & 2\\
6 & 5 \\
1
\end{ytableau}\quad\quad\quad  \tabtwo=\begin{ytableau}
4 & 3\\
1 \\
6 & 5& 2
\end{ytableau}
\]
Here we have $$P_\tabone=(x_1-z_1)(x_1-z_2)(x_4-z_2)\quad\text{and}\quad P_\tabtwo=(x_1-z_1)(x_1-z_3)(x_4-z_3).$$
While these polynomials are different, they correspond to the same Garsia-Procesi monomial, as $\ev(\tabone)=\ev(\tabtwo)=x_1^2x_4\in \GP_{(3,2,1)}$.
\end{example}

The next theorem tells us that the collection of equivariant Springer monomials is an $S(\fs^*)$-module basis for the equivariant cohomology ring $\ca  \simeq H_S^*(\B^\alpha)$. We study the structure coefficients of $\ca$ with respect to this basis in the next section.

\begin{thm}\label{T:P_basis}
Let $\alpha=(\alpha_1,\ldots,\alpha_k)$ be a \textup{(}strong\textup{)} composition of $n$.  The collection of equivariant Springer monomials $\{P_\tabone(\mathbf{z}, \bfx) \mid \tabone\in \RSCT_n(\alpha)\}$ is an $S(\fs^*)$-module basis of $\ca \simeq H_S^*(\B^\alpha)$.
\end{thm}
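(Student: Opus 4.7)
The plan is to deduce the theorem almost directly from Lemma~\ref{P:equiv_basis} and Lemma~\ref{lem.GPbasis}, with a cardinality count as the bridge between the two.

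First I would observe that each $P_\tabone(\mathbf{z},\bfx)\in\C[\fs\times\ft]$ is homogeneous of degree $|\Inv(\tabone)|$, since by Definition~\ref{def.p-poly} it is a product of linear forms $x_i - z_j$ (each of degree one in the combined grading of $\C[\fs\times\ft]$). Hence its image in $\ca$ under $\pi$ is homogeneous, and Lemma~\ref{lem.GPbasis} gives $\ev(P_\tabone)\in \GP_\lambda$ for every $\tabone\in \RSCT_n(\alpha)$, where $\lambda$ is the underlying partition shape of $\alpha$.

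Next I would perform a cardinality count. On one hand, $|\RSCT_n(\alpha)| = n!/(\alpha_1!\cdots\alpha_k!)$, as noted after the definition of $\RSCT_n(\beta)$. On the other hand, this multinomial coefficient equals $|W/W_L|$, which by Remark~\ref{rem.free} is the rank of $\ca$ as a free $S(\fs^*)$-module, and equivalently the $\C$-dimension of $\ca_0\simeq H^*(\B^\alpha)$; in particular $|\GP_\lambda|= n!/(\lambda_1!\cdots\lambda_k!) = n!/(\alpha_1!\cdots\alpha_k!)$ since $\lambda$ is a rearrangement of $\alpha$. Therefore the map $\tabone\mapsto \ev(P_\tabone)$ from $\RSCT_n(\alpha)$ to $\GP_\lambda$ is a surjection between finite sets of equal cardinality by Lemma~\ref{lem.GPbasis}, hence a bijection.

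Finally I would conclude by applying Lemma~\ref{P:equiv_basis}. Enumerating the monomials as $\GP_\lambda = \{b_1,\dots,b_m\}$ with $m = \dim_\C \ca_0$, the bijection above produces homogeneous elements $B_i := P_{\tabone_i}\in\ca$ with $\ev(B_i) = b_i$ for all $i$. Lemma~\ref{P:equiv_basis} then immediately yields that $\{P_\tabone \mid \tabone\in\RSCT_n(\alpha)\}$ is an $S(\fs^*)$-module basis of $\ca\simeq H_S^*(\B^\alpha)$.

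There is no real obstacle here, as the technical work has been front-loaded: the nontrivial steps were establishing Lemma~\ref{lem.GPbasis} (which required the inductive $\map$/$Q_\tabone$ decomposition matching the recursion~\eqref{eqn.GPbasis}) and Lemma~\ref{P:equiv_basis} (whose spanning direction requires a degree induction and whose linear independence direction uses freeness of $\ca$). The only point to double-check when writing the proof is that the cardinality identification $|W/W_L| = n!/\prod\alpha_i! = |\GP_\lambda|$ is recorded or justified in passing, since this is what converts the set-theoretic equality of Lemma~\ref{lem.GPbasis} into the bijectivity needed to feed into Lemma~\ref{P:equiv_basis}.
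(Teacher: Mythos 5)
Your proposal is correct and takes essentially the same approach as the paper, which deduces the theorem directly from Lemma~\ref{P:equiv_basis} and Lemma~\ref{lem.GPbasis} after noting that the $P_\tabone$ are homogeneous. Your cardinality count $|\RSCT_n(\alpha)| = n!/\prod\alpha_i! = |W/W_L| = |\GP_\lambda|$, which upgrades the set equality of Lemma~\ref{lem.GPbasis} to the bijection needed to feed into Lemma~\ref{P:equiv_basis}, is a worthwhile detail that the paper leaves implicit.
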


\begin{proof} The polynomials $P_\tabone(\mathbf{z}, \mathbf{x})$ are homogeneous elements of $\ca$.  Lemmas~\ref{P:equiv_basis} and~\ref{lem.GPbasis} now imply the desired result.
\end{proof}


\section{Localization and Determinant Formulas}\label{sec.determinant}

In this section, we explore algebraic properties of the equivariant Springer monomials.  The results of this section establish methods for computing the expansion of any $F\in \ca$ as an $S(\fs^*)$-linear combination of the $P_\tabone$, $\tabone\in \RSCT_n(\alpha)$.  We begin by showing that the equivariant Springer monomials satisfy upper triangular vanishing relations with respect to the total ordering defined on row-strict composition tableaux defined in the previous section.  We then use these vanishing properties to give a determinant formula for the structure coefficients in Theorem~\ref{T:P_basis_det_formula} below.


\subsection{Localization formulas}

Suppose $\alpha=(\alpha_1, \ldots, \alpha_k)$ is a strong composition of $n$.  Let $h=(h_1, \ldots, h_k)$ be a regular element of $\fs$, which we identify as a point in $\ft$, by
\begin{equation}\label{Eq:h_regular}
h=((h_1)^{\alpha_1},\ldots,(h_k)^{\alpha_k})\in \ft.
\end{equation}
The condition that $h$ be a regular element means that each of the $h_i$ are distinct.  For every $w\in W$, there is a natural localization map,
\begin{equation}\label{Eq:localization_map}
    \phi_w:S(\fs^*)\otimes S(\ft^*)\rightarrow S(\fs^*)
\end{equation}
given by $\phi_w(F(\mathbf{z}, \mathbf{x})) = F(\mathbf{z}, w\cdot \mathbf{z})$.  In other words, $\phi_w(F)(h):=F(h,w\cdot h)$ for any $h\in\fs$.  Here $W$ acts on $\fs^*$ (and the coordinates of $\mathbf{z}$) by permuting the entries; for example, if $w=[2,4,1,3]=s_1s_2$ and $h=(h_1, h_1, h_2,h_2)$ then $w\cdot h = s_1s_2\cdot h = (h_2,h_1,h_1,h_2)$.

It is easy to see that $F\in \ci(Z_\alpha)$ if and only if $\phi_w(F)\equiv 0$ for all $w\in W$.  Hence any $F\in\ca$ is uniquely determined by the collection of values $\{\phi_w(F)\mid w\in W\}$. Recall that $L$ is the Levi subgroup of $GL_n(\C)$ determined by the composition $\alpha$ and $W_L$ denotes the Weyl group of $L$.  Since $L$ is standard, the parabolic subgroup $W_L$ is generated by a subset of simple reflections. Also, since $W_L$ acts trivially on $\fs$ (because $S=Z_G(L)_0$), it suffices to consider the maps $\phi_w$ where $w\in W^L$.  Here $W^L$ denotes the set of minimal length coset representatives of $W/W_L$. Recall that each permutation $w\in W$ can be written uniquely as $w=vy$ for $v\in W^L$ and $y\in W_L$.

 We now associate a coset representative $w_\tabone\in W^L$ to each $\tabone\in \RSCT_n(\alpha)$ by constructing a vector $h_\tabone\in\ft$ which is a particular permutation of the coordinates of $h$.  Specifically, if $i$ lies in the $j$-th row of $\tabone$, then we require the $i$-th coordinate of $h_\tabone$ equal to $h_j$.  Let $w_\tabone$ to be the unique permutation in $W^L$ such that $h_\tabone=w_\tabone h$. Observe that the map from $\RSCT_n(\alpha)$ to $W^L$ given by $\tabone \mapsto w_\tabone$ is a bijection.

\begin{example}
Let $n=5$ and $\alpha=(2,1,2)$ with $\tabone$ given by:
\[\begin{ytableau}
5 & 1\\
2\\
4 & 3\\
\end{ytableau}\]
Then $h_\tabone=(h_1,h_2,h_3,h_3,h_1)$ with $w_\tabone=[1,5,2,3,4]$ (in one-line notation).  Note that in this case, $W_L = \left< s_1, s_4 \right>$ and it easy to check that $w_\tabone\in W^L$; we have only to observe that $w_\tabone(1)<w_\tabone(2)$ and $w_{\tabone}(4)<w_\tabone(5)$.    Also, in this example we have $P_\tabone=(x_2-z_1)(x_4-z_1)$ since $\Inv(\tabone) = \{ (2,1), (4,1) \}$.
\end{example}

Our next proposition says that the equivariant Springer monomials satisfy upper triangular vanishing conditions with respect to the total order on row-strict composition tableaux defined in the previous section.

\begin{prop}\label{P:upper_triangular_Pbasis}
Let $\tabtwo< \tabone \in\RSCT_n(\alpha)$.  Then the following are true:
\begin{enumerate}
\item $\phi_{w_\tabone}(P_\tabone)\neq 0$, and
\item $\phi_{w_\tabtwo}(P_\tabone)= 0.$
\end{enumerate}
\end{prop}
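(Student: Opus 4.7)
The plan is to verify (1) directly from the conventions set up for $w_\tabone$, and to prove (2) by induction on $\bar m$ (running downward from $\bar m = n$), mirroring the recursive definitions of the total order on $\RSCT_n(\alpha)$, of the map $\map$, and of the factorization $P_\tabone = Q_\tabone \cdot P_{\map(\tabone)}$ in equation~\eqref{eqn.Pdecomp}.

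For (1), write $j_i(\tabone)$ for the index of the row of $\tabone$ containing $i$. Since the embedding $\fs \hookrightarrow \ft$ followed by $w_\tabone$ places $z_{j_i(\tabone)}$ in position $i$, we obtain
\[
\phi_{w_\tabone}(P_\tabone)(\mathbf z) \;=\; \prod_{(i,j)\in\Inv(\tabone)} \bigl(z_{j_i(\tabone)}-z_j\bigr).
\]
By Definition~\ref{defn.inv}, if $(i,j)\in\Inv(\tabone)$ then some $j'>i$ lies in row $j$; the row-strict condition precludes $i$ itself from lying in row $j$, so $j_i(\tabone)\neq j$ and each factor is a nonzero linear polynomial. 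Hence $\phi_{w_\tabone}(P_\tabone)\neq 0$.

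For (2) we induct on $\bar m$. The base case $\bar m = n$ is vacuous since $|\RSCT_n(\alpha)|=1$. For the inductive step, Definition~\ref{D:Total_ordering} splits the hypothesis $\tabtwo<\tabone$ into two cases. If $(\bar m, j_\tabtwo)\in\Inv(\tabone)$, then the factor $(x_{\bar m}-z_{j_\tabtwo})$ of $Q_\tabone$, and hence of $P_\tabone$, is killed under $\phi_{w_\tabtwo}$, since $\bar m$ lies in row $j_\tabtwo$ of $\tabtwo$. Otherwise $j_\tabone=j_\tabtwo$ and $\map(\tabtwo)<\map(\tabone)$ inside $\RSCT_n(\beta')$, where $\beta'$ is obtained from $\alpha$ by decreasing the $j_\tabone$-th part by one. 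The induction hypothesis yields $(i,j)\in\Inv(\map(\tabone))$ with $i$ in row $j$ of $\map(\tabtwo)$. Via~\eqref{eqn.Pdecomp}, this inversion lifts to an element of $\Inv(\tabone)$, contributing the factor $(x_i-z_j)$ to $P_\tabone$; since $i\neq \bar m$ retains its original row in $\tabtwo$, this factor vanishes under $\phi_{w_\tabtwo}$.

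The main point requiring care — and the most likely source of a mis-step — is the bookkeeping between inversions of $\tabone$ and of $\map(\tabone)$ in the second case: one must confirm that the row indexing survives the deletion of $\bar m$ (so $\beta'$ is a well-defined weak composition with the same labels for nonempty rows) and that every inversion of $\map(\tabone)$ lifts to an inversion of $\tabone$. Both facts follow from the observation that $\bar m$ occupies the rightmost box of its row, so restoring it alters no positional relationship among entries $i>\bar m$. Once this compatibility is recorded, the induction proceeds exactly in parallel with the recursive definitions, and no additional combinatorial input is needed.
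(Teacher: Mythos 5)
Your proof is correct and follows essentially the same route as the paper's: part (1) is identical, and part (2) rests on the same key observation that $\tabtwo<\tabone$ produces an inversion $(i,j)\in\Inv(\tabone)$ with $i$ lying in row $j$ of $\tabtwo$, which kills the factor $(x_i-z_j)$ under $\phi_{w_\tabtwo}$. The only difference is that you carry out explicitly the induction (unwinding Definition~\ref{D:Total_ordering} via the decomposition~\eqref{eqn.Pdecomp}) that the paper compresses into the one-line assertion of that combinatorial fact.
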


\begin{proof}
Fix a regular element $h\in \fs$ as in~\eqref{Eq:h_regular}.  We first prove part (1) of the proposition.  By definition, if $h_j$ is the $i$-th coordinate of $h_\tabone$, then $i$ is contained in the $j$-th row of $\tabone$.  We have
$$\phi_{w_\tabone}(P_\tabone)(h)=P_\tabone(h,h_\tabone)=\prod_{(i,j)\in\Inv(T)} ((h_\tabone)_i-h_j).$$
Note that if $(i,j)\in\Inv(\tabone)$, then $i$ cannot be contained in the $j$-th row of $\tabone$.  Hence $(h_\tabone)_i\neq h_j$ for all $(i,j)\in\Inv(\tabone)$ and $\phi_{w_\tabone}(P_\tabone)(h)\neq 0$ as claimed.

We now prove part (2).  Indeed, we have
$$
\phi_{w_\tabtwo}(P_\tabone)(h)=P_\tabone(h,h_\tabtwo)=\prod_{(i,j)\in\Inv(T)} ((h_\tabtwo)_i-h_j).
$$
Since $\tabtwo<\tabone$, there exists $(i,j)\in \Inv(\tabone)$ such that the content of $j$-th row of $\tabtwo$ contains $i$.  This implies that $(h_\tabtwo)_i=h_j$ and hence $\phi_{w_\tabtwo}(P_\tabone)(h)=0$. Since $h\in\fs$ is an arbitrary regular element, we have $\phi_{w_\tabtwo}(P_\tabone)= 0$ in~$S^*(\fs)$.
\end{proof}

\begin{rem} A alternative proof of Theorem~\ref{T:P_basis} from the previous section can be given using Proposition~\ref{P:upper_triangular_Pbasis} as follows. Note that one can establish the fact that $\{P_\tabone\mid \tabone \in \RSCT_n(\alpha)\}$ is an $S(\fs^*)$-linearly independent set by using the vanishing conditions of Proposition~\ref{P:upper_triangular_Pbasis}.  Furthermore, the number of polynomials in  $\{P_\tabone\mid \tabone \in \RSCT_n(\alpha)\}$ of degree $k$ is precisely $H^{2k}(\B^\alpha)$ by Lemma~\ref{lem.GPbasis}. Thus $\{P_\tabone\mid \tabone \in \RSCT_n(\alpha)\}$ is an $S(\fs^*)$-basis of $\ca$ by Proposition 18 of \cite{HT11}.
\end{rem}

We conclude with a detailed example.

\begin{example}\label{ex.(2,2)} Let $n=4$ and $\alpha=(2,2)$.  A table of $P_\tabone$, $w_\tabone$ and $h_\tabone$ for all elements $\tabone\in\RSCT_4(\alpha)$ is displayed in Figure 1 below.
The matrix $[\phi_{w_\tabtwo}(P_\tabone)]_{(\tabone,\tabtwo)\in \RSCT_4(\alpha)^2}$ written with respect to the total ordering on $\RSCT_4(\alpha)$ given in Example \ref{ex.total_order} is:
$$\begin{bmatrix}
\quad 1\quad & 1 & 1 & 1 & 1 & 1\\
0 & z_2-z_1 & z_2-z_1 & 0 & z_2-z_1 & 0\\
0 & 0 & z_1-z_2 & z_1-z_2 & z_1-z_2 & 0\\
0 & 0 & 0 & z_2-z_1 & z_2-z_1 & z_2-z_1\\
0 & 0 & 0 & 0 & (z_2-z_1)^2 & 0\\
0 & 0 & 0 & 0 & 0 & (z_2-z_1)^2\\
\end{bmatrix}.$$
Proposition \ref{P:upper_triangular_Pbasis} implies this matrix is always upper triangular with respect to the total ordering in Definition \ref{D:Total_ordering} with non-vanishing polynomials in $S(\fs^*)$ on the diagonal.
\begin{figure}[h]
\[
\begin{tabular}{cccc}
$\tabone$& $P_\tabone$ & $w_\tabone$ & $h_\tabone$ \\ \hline
\multirow{2}{*}{\begin{ytableau} 3 & 1\\ 4 & 2\\ \end{ytableau}}& \multirow{2}{*}{1} &  \multirow{2}{*}{$[1,3,2,4]$} \,&  \multirow{2}{*}{$(h_1, h_2, h_1, h_2)$}\\
& & & \\
\multirow{2}{*}{\begin{ytableau} 4 & 1\\ 3 & 2\\ \end{ytableau}}&  \multirow{2}{*}{$x_3-z_1$} &  \multirow{2}{*}{$[1,4,2,3]$}  \, &  \multirow{2}{*}{$(h_1, h_2, h_2, h_1)$}\\
& & & \\
 \multirow{2}{*}{\begin{ytableau} 2 & 1\\ 4 & 3\\ \end{ytableau}}&  \multirow{2}{*}{$x_2-z_2$} &  \multirow{2}{*}{$[1,2,3,4]$} \, &  \multirow{2}{*}{$(h_1, h_1, h_2, h_2)$}\\
& & & \\
 \multirow{2}{*}{\begin{ytableau} 3 & 2\\ 4 & 1\\ \end{ytableau}}&  \multirow{2}{*}{$x_1-z_1$} &  \multirow{2}{*}{$[2,3,1,4]$}  \,&  \multirow{2}{*}{$(h_2, h_1, h_1, h_2)$}\\
& & & \\
 \multirow{2}{*}{\begin{ytableau} 4 & 2\\ 3 & 1\\ \end{ytableau}} &  \,\multirow{2}{*}{ $(x_1-z_1)(x_3-z_1)$} \, \, &  \multirow{2}{*}{$[2,4,1,3]$}  \,&  \multirow{2}{*}{$(h_2, h_1, h_2, h_1)$}\\
& & & \\
 \multirow{2}{*}{\begin{ytableau} 4 & 3\\ 2 & 1\\ \end{ytableau}}&  \multirow{2}{*}{$(x_1-z_1)(x_2-z_1)$} &  \multirow{2}{*}{$[3,4,1,2]$} \, &  \multirow{2}{*}{$(h_2, h_2, h_1, h_1)$}\\
 &&&
\end{tabular}
\]
\begin{caption}{Equivariant Springer monomials for $n=4$ and $\alpha=(2,2)$.}
\end{caption}
\label{fig1}
\end{figure}
\end{example}


\subsection{Structure constants for the equivariant Springer monomials}

We now present a determinant formula for calculating the structure coefficients of the expansion of $F\in \ca$ in the basis of equivariant Springer monomials.  For these calculations, we work in the algebra
$$Q(\fs^*)\otimes_{S(\fs^*)}\ca$$
where the $Q(\fs^*)$ denotes the field of fractions of $S(\fs^*)$.  We index the set
$$\RSCT_n(\alpha)=\{\tabone_1<\cdots < \tabone_N\}$$
by the total ordering given in Definition~\ref{D:Total_ordering} where $N=|\RSCT_n(\alpha)|=|W^L|$.

For notational and computational simplicity, let $P_i:=P_{\tabone_i}$ and $w_i:=w_{\tabone_i}$.  Given any $F\in\ca$, we write
\begin{equation}\label{Eq:f_exp_to_P}
F=\sum_{k=1}^N C_k\, P_k
\end{equation}
for some coefficients $C_k\in S(\fs^*)$.  Define vectors
$$\textbf{c}:=[C_1,\ldots,C_N] \quad\text{and}\quad \textbf{v}:=[\phi_{w_1}(f),\ldots, \phi_{w_N}(f)],$$
and the matrix
$$\overline{P}:=\left[\phi_{w_j}(P_i)\right]_1^N.$$
Note that $\overline{P}$ was computed for $n=4$ and $\alpha=(2,2)$ in Example~\ref{ex.(2,2)}.  Equation~\eqref{Eq:f_exp_to_P} implies $\textbf{c}\cdot \overline{P}=\textbf{v}$. Proposition~\ref{P:upper_triangular_Pbasis} tells us that $\overline{P}$ is an upper triangular matrix with nonzero diagonal entries, and is therefore invertible as a matrix with entries in $Q(\fs^*)$.   Hence
\begin{equation}\label{Eq:inverse_coefficent_formula}\textbf{c}=\textbf{v}\cdot \overline{P}^{-1}.\end{equation}
Our next theorem uses this equation to prove that each coefficient $C_k$ is the determinant of some matrix with entries determined by $\mathbf{v}$ and $\phi_{w_j}(P_i)$.  Normalize the polynomials $P_i$ by defining $Q_i:=\frac{1}{\phi_{w_i}(P_i)}\cdot P_i.$ Note that this definition makes sense, since $\phi_{w_i}(P_i)\neq 0$ for all $i$ by Proposition~\ref{P:upper_triangular_Pbasis}.

\begin{thm}\label{T:P_basis_det_formula}
Suppose $F\in \ca$ and define
\[
a(i,j):=
\begin{cases}
    \phi_{w_j}(F) & \text{for $i=0$}\\
    \phi_{w_j}(Q_i) & \text{for $i>0$}.
\end{cases}
\]
Write
\begin{equation}\label{Eq:f_exp_to_Q}
 F=\sum_{k=1}^N D_k\, Q_k.
\end{equation}
Then $D_k=(-1)^{k-1}\det\left[a(i,j+1)\right]_0^{k-1}$. In particular, the coefficients for $F$ appearing in~\eqref{Eq:f_exp_to_P} are
\[
C_k = \frac{(-1)^{k-1}}{\phi_{w_i}(P_i)}\det\left[a(i,j+1)\right]_0^{k-1}
\]
for all $1\leq k\leq N$.
\end{thm}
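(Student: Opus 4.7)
The plan is to translate the relation $F = \sum_{i=1}^N D_i Q_i$ into a triangular linear system via the localization maps $\phi_{w_j}$ and then extract each $D_k$ by Cramer's rule.

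Concretely, apply $\phi_{w_j}$ to both sides of~\eqref{Eq:f_exp_to_Q}. Since $Q_i = P_i/\phi_{w_i}(P_i)$, Proposition~\ref{P:upper_triangular_Pbasis} yields $\phi_{w_j}(Q_i) = 0$ whenever $\tabone_j < \tabone_i$ (i.e.\ whenever $j<i$ in our total order) and $\phi_{w_j}(Q_j) = 1$. So the relation becomes the triangular system
\[
\sum_{i=1}^{j} D_i \, a(i,j) \,=\, a(0,j), \qquad j = 1, \ldots, N,
\]
equivalently $\mathcal{Q}^T \mathbf{D} = \mathbf{b}$, where $\mathcal{Q} := [a(i,j)]_{i,j=1}^N$ is upper triangular with $1$'s on its diagonal and $\mathbf{b}_j := a(0,j)$. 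By Cramer's rule, $D_k = \det(\mathcal{Q}^T_k)/\det(\mathcal{Q}^T) = \det(\mathcal{Q}^T_k)$, where $\mathcal{Q}^T_k$ is $\mathcal{Q}^T$ with its $k$-th column replaced by $\mathbf{b}$. The upper-triangularity of $\mathcal{Q}$ forces $\mathcal{Q}^T_k$ to have block form $\begin{pmatrix} M & 0 \\ \ast & L \end{pmatrix}$, where $L$ is $(N-k) \times (N-k)$ lower triangular with unit diagonal; hence $\det(\mathcal{Q}^T_k) = \det M$, with $M$ the $k \times k$ matrix whose row $j$, column $i$ entry equals $a(i,j)$ for $1 \leq i < k$ and $a(0,j)$ for $i = k$.

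It remains to identify $\det M$ with $(-1)^{k-1} \det[a(i,j+1)]_0^{k-1}$: column by column, $M$ is obtained from the transpose of $[a(i,j+1)]_0^{k-1}$ by cyclically moving the column indexed by $i = 0$ to the last position, which is a $k$-cycle of sign $(-1)^{k-1}$. This yields the formula for $D_k$, and the formula for $C_k$ then follows from $Q_k = P_k/\phi_{w_k}(P_k)$. The main obstacle is this final bookkeeping step --- carefully matching rows and columns between the Cramer's rule matrix $M$ and the theorem's matrix, and tracking the sign from the resulting column permutation --- but once this identification is made the argument reduces to Cramer's rule applied to the triangular system provided by Proposition~\ref{P:upper_triangular_Pbasis}.
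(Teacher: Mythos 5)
Your proof is correct and rests on the same foundation as the paper's: applying the localization maps $\phi_{w_j}$ to~\eqref{Eq:f_exp_to_Q} and exploiting the unitriangularity of $[\phi_{w_j}(Q_i)]$ guaranteed by Proposition~\ref{P:upper_triangular_Pbasis}. The only difference is packaging: the paper extracts $D_k$ by induction on $k$ together with a cofactor expansion of $\det\left[a(i,j+1)\right]_0^{k-1}$ along its last column, whereas you invoke Cramer's rule on the full system and then track the block structure and the sign of the column cycle --- both routes amount to the same back-substitution, and your bookkeeping of the $(-1)^{k-1}$ checks out.
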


\begin{proof}
Define the matrix  $A_k:=[a(i,j+1)]_0^{k-1}$ and let $A_{\ell,k}$ denote the submatrix of $A_k$ obtained by removing the $\ell$-th row and $k$-th column.  Applying Proposition~\ref{P:upper_triangular_Pbasis}, we observe that $a(i,i)=1$ for all $i\geq 1$ and $a(i,j)=0$ if $i>j$.  This implies $\det(A_{1,k})=1$ and
\begin{equation}\label{Eq:determinant}
\det(A_{\ell,k})=\det(A_{\ell-1})
\end{equation}
for $\ell\geq 2$.  We prove the theorem by induction on $k$.  When $k=1$, applying $\phi_{w_1}$ to both sides of Equation \ref{Eq:f_exp_to_Q} gives $D_1=a(0,1)$, as desired.  Now suppose for all $\ell<k$, we have that
\begin{equation}\label{eqn1.det}
D_\ell=(-1)^{\ell-1}\det(A_{\ell}).
\end{equation}
We now apply the localization map $\phi_{w_k}$ to both sides of Equation \eqref{Eq:f_exp_to_Q}.  Solving for $D_k$ and applying Equations \eqref{Eq:determinant} and \eqref{eqn1.det} yields
\begin{align*}
D_k&=a(0,k)-\sum_{\ell=1}^{k-1} D_\ell\, a(\ell,k)\\
&=a(0,k)+\sum_{\ell=1}^{k-1}(-1)^{\ell}\det(A_{\ell})\, a(\ell,k).\\
&=a(0,k)+\sum_{\ell=1}^{k-1}(-1)^{\ell}\det(A_{\ell+1,k})\, a(\ell,k).\\
&=(-1)^{k-1}\det(A_k),
\end{align*}
proving the theorem.
\end{proof}

This theorem provides us with the computational tools to expand any polynomial of $\ca$ in the basis of equivariant Springer monomials.  It follows immediately that we can compute the expansion of any polynomial in $\ca_0\simeq H^*(\B^\alpha)$ in the Springer monomial basis by simply applying the evaluation map $\ev:\ca\rightarrow \ca_0$.  We use these results in the next section to study the images of monomials and Schubert polynomials in $H^*(\B^\alpha)$.

\begin{example} Let $n=4$ and $\alpha=(2,2)$.  The polynomials $P_\tabone$ for $\tabone\in \RSCT_4(\alpha)$ are computed in Example~\ref{ex.(2,2)} (see Figure 1 also).  In this case $N=6$ and the total order on $\RSCT_4(\alpha)$ is as in Example~\ref{ex.total_order}, so the rows of the table in Figure 1 list the polynomials in order: $P_1,\cdots, P_6$, from top to bottom. We compute the expansion of
\[
F(\mathbf{z},\mathbf{x}) = x_1+x_2+x_3 - 2z_1+z_2
\]
using the determinant formula of Theorem~\ref{T:P_basis_det_formula}. The reader may note that $F$ is the image of the double Schubert polynomial $\mathfrak{S}_{s_3}(\mathbf{y}, \mathbf{x})\in \C[\ft\times \ft]$ under the map $i^*: \C[\ft \times \ft]\to \C[\fs\times \ft]$.
The matrix $[a(i,j+1)]_0^{5}$ from Theorem \ref{T:P_basis_det_formula} is given by:
$$\begin{bmatrix}
\quad 0\quad & z_2-z_1 & \quad 0\quad  &\quad 0\quad & z_2-z_1 & z_2-z_1\\
1 & 1 & 1 & 1 & 1 & 1\\
0 & 1 & 1 & 0 & 1 & 0\\
0 & 0 & 1 & 1 & 1 & 0\\
0 & 0 & 0 & 1 & 1 & 1\\
0 & 0 & 0 & 0 & 1 & 0\\
\end{bmatrix}.$$
Where the first row is the vector $\textbf{v}=[\phi_{w_1}(F),\ldots, \phi_{w_6}(F)]$ with the rest of the matrix coming from first five rows of the matrix in Example \ref{ex.(2,2)} (normalized to $\phi_{w_j}(Q_i)$).
If $F=\sum_{i=1}^6 D_k\, Q_k$, then Theorem \ref{T:P_basis_det_formula} says the coefficients $D_i$ are given by the upper-left minors (with a sign) yielding:
$$\textbf{d}:=[D_1,\ldots,D_6]=[0,z_2-z_1, z_1-z_2, z_2-z_1,0,0].$$
This implies
$$\textbf{c}:=[C_1,\ldots,C_6]=[0,1, 1, 1,0,0]$$
and hence $F=P_2+P_3+P_4.$  Note that we can also compute $\textbf{c}$ by using the equation $\textbf{c}=\textbf{v}\cdot\overline{P}^{-1}$ with
$$\overline{P}^{-1} = \begin{bmatrix}
\quad 1\quad & (z_1-z_2)^{-1} & 0 & (z_1-z_2)^{-1} & (z_2-z_1)^{-2} & 0\\
0 & (z_2-z_1)^{-1} & (z_2-z_1)^{-1} & (z_2-z_1)^{-1} & -(z_2-z_1)^{-2} & -(z_2-z_1)^{-2}\\
0 & 0 & (z_1-z_2)^{-1} & (z_1-z_2)^{-1} & 0 & (z_2-z_1)^{-2}\\
0 & 0 & 0 & (z_2-z_1)^{-1} & -(z_2-z_1)^{-2} & -(z_2-z_1)^{-2}\\
0 & 0 & 0 & 0 & (z_2-z_1)^{-2} & 0\\
0 & 0 & 0 & 0 & 0 & (z_2-z_1)^{-2}\\
\end{bmatrix}$$
the inverse of the matrix from Example~\ref{ex.(2,2)}.
\end{example}

\begin{rem}
If $F$ is the image of a double Schubert polynomial $\mathfrak{S}_{w}(\mathbf{y}, \mathbf{x})\in \C[\ft\times \ft]$, then the vector $\textbf{v}=[\phi_{w_1}(F),\ldots,\phi_{w_N}(F)]$ can be computed directly using Billey's localization formula (also called the  Andersen--Jantzen--Soergel formula) given in \cite[Theorem 3]{Bi96}.
\end{rem}


\section{Monomials and Schubert polynomials}\label{sec.monomial-Schubert}
In this section, we study the images of the Schubert polynomials $\mathfrak{S}_w(\bfx)$ under the map $\pi_0:\C[\ft] \to \ca_0$.   We use Theorem \ref{T:P_basis_det_formula} to identify an explicit collection of permutations $W(\alpha)\subset W$ for which the set $\{\pi_0(\mathfrak{S}_w(\bfx))\ |\ w\in W(\alpha)\}$ is a basis of $\ca_0 \simeq H^*(\B^\alpha)$.  This result is stated in Theorem~\ref{T:Schubert_Basis}.  We obtain an analogous statement for equivariant cohomology in Corollary~\ref{cor.equiv-main-thm}.  Our analysis generalizes work of Harada--Tymoczko \cite{HT17} and Harada--Dewitt \cite{DH} in the sense that Corollary~\ref{cor.equiv-main-thm} implies the existence of an explicit module basis for $H_S^*(\B^\alpha)$ constructed by playing poset pinball.

We prove Theorem~\ref{T:Schubert_Basis} in two steps.  First, we use the expansion formula of Theorem~\ref{T:P_basis_det_formula} to prove that the Springer monomial basis $\GP_\lambda$ of $H^*(\B^\alpha)$ defined in~\eqref{eqn.GPbasis} above is upper-triangular in an appropriate sense.  In particular, we study the expansion of any monomial in $\ca_0$ with respect to the Springer monomial basis.  Since each Schubert polynomial is a sum of monomials, we are then able to leverage our results for monomials to prove the desired result for Schubert polynomials.  More specifically, we prove that the transition matrix from $\{\pi_0(\mathfrak{S}_w)\mid w\in W(\alpha)\}$ to $\GP_\lambda$ is invertible.

To begin, recall the commutative diagram from~\eqref{eqn.cd}.  In particular, recall that
$\ca \simeq\C[\ft\times\ft]/\ci(Z_\alpha)$ and $\ca_0 \simeq \C[\ft]/\ev(\ci(Z_\alpha))$ and the maps $\pi$ and $\pi_0$ denote the canonical projection maps.


\subsection{Monomials}\label{sec.monomial} The first class of polynomials we study are monomials in the ring $\C[\ft]\simeq \C[x_1,\ldots,x_{n-1}]$.  Monomials in $\C[\ft]$ are indexed by weak compositions $\delta=(\delta_1,\ldots,\delta_{n-1})$ under the exponent identification
$$\delta\mapsto \bfx^\delta:=x_1^{\delta_1}\cdots x_{n-1}^{\delta_{n-1}}.$$
We impose the lexicographical total ordering on monomials.  In other words, $\bfx^{\gamma}<\bfx^\delta$ if and only if $\gamma_k<\delta_k$ where $k$ denotes the smallest index where the entries of the compositions $\gamma$ and $\delta$ differ.

If $\tabone\in \RSCT_n(\alpha)$, then $\ev(P_\tabone)$ is a monomial in $\C[\ft]$.  Hence we define the notation
$$\bfx^\tabone:=\ev(P_\tabone).$$
By Lemma~\ref{lem.GPbasis} the set of all monomials obtained in this way is precisely the set of Springer monomials $\GP_\lambda$ where $\lambda$ is the underlying partition shape of $\alpha$.  Recall that, by convention, since $\bfx^\tabone \in \GP_\lambda$ we also write $\bfx^\tabone$ to denote the image of the monomial $\bfx^\tabone$ in $\ca_0$ under $\pi_0$.  Observe that if $\gamma$ is the associated exponent composition of $\bfx^\tabone$, then $\gamma_i$ is simply the number of inversions in $\Inv(\tabone)$ whose first factor is $i$.  Hence we will call the composition $\gamma$ the \textbf{inversion vector} of $\tabone$.  For $\tabone$ as in Example~\ref{Ex:inversions}, the inversion vector is $\gamma=(0,2,0,0,2,1,0,0)$ and $\bfx^\tabone=x_2^2x_5^2x_6$.  The next lemma follows immediately from the definition of the total order on $\RSCT_n(\alpha)$.

\begin{lem}\label{L:monomial-tab_order} Let $\tabone,\tabtwo\in \RSCT_n(\alpha)$. Then $\tabtwo<\tabone$ as row strict composition tableaux \textup{(}c.f.~Definition~\ref{D:Total_ordering}\textup{)} if and only if $\bfx^\tabtwo<\bfx^\tabone$ as monomials in $\C[\ft]$.
\end{lem}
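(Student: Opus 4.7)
The plan is to proceed by reverse induction on $\bar m = n - m + 1$, the smallest label appearing in $\RSCT_n(\beta)$, exactly mirroring the recursive structure of Definition~\ref{D:Total_ordering}. The statement should be formulated for arbitrary weak composition shapes $\beta$ of $m \le n$, since this is what the inductive step produces via the map $\map$. The base case $\bar m = n$ is trivial because $\RSCT_n(\beta)$ contains at most one tableau in that situation.

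The key observation linking the two orderings is that applying $\ev$ to the factorization~\eqref{eqn.Pdecomp} yields $\bfx^\tabone = x_{\bar m}^{|\Inv_{\bar m}(\tabone)|} \cdot \bfx^{\map(\tabone)}$, where $\bfx^{\map(\tabone)}$ involves only the variables $x_{\bar m + 1}, \ldots, x_{n-1}$, since $\map(\tabone)$ has content $[\bar m + 1, n]$. Consequently, both $\bfx^\tabone$ and $\bfx^\tabtwo$ have exponent zero on the variables $x_1, \ldots, x_{\bar m - 1}$, so their lex comparison is controlled first by the exponent of $x_{\bar m}$ and, in case of equality, by the lex comparison of $\bfx^{\map(\tabone)}$ against $\bfx^{\map(\tabtwo)}$. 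This parallels exactly the recursive comparison in Definition~\ref{D:Total_ordering}, and the strategy is simply to match the two rules at each level of the recursion.

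In the non-tie case $j_\tabone \ne j_\tabtwo$, I plan to use~\eqref{eqn.Inv_m} to reinterpret $|\Inv_{\bar m}(\tabone)|$ as the rank of row $j_\tabone$ under the total order on nonzero rows of $\beta$ that places $j$ above $i$ whenever $\beta_j > \beta_i$, or $\beta_j = \beta_i$ and $j < i$. Under this rank interpretation, the condition $(\bar m, j_\tabtwo) \in \Inv(\tabone)$ holds precisely when $j_\tabtwo$ is ranked strictly above $j_\tabone$, which is equivalent to $|\Inv_{\bar m}(\tabone)| > |\Inv_{\bar m}(\tabtwo)|$. Hence $\tabtwo < \tabone$ in this case if and only if the $\bar m$-th coordinate of the exponent vector of $\bfx^\tabone$ strictly exceeds that of $\bfx^\tabtwo$, which by the first paragraph is equivalent to $\bfx^\tabtwo < \bfx^\tabone$ in lex order. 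In the tie case $j_\tabone = j_\tabtwo$, the $x_{\bar m}$-exponents agree, $\map(\tabone)$ and $\map(\tabtwo)$ share a common shape $\beta'$, and the inductive hypothesis combined with the first-paragraph reduction yields the desired equivalence. The main technical point is the rank reinterpretation of~\eqref{eqn.Inv_m}; Lemma~\ref{lemma.inv} is what guarantees consistency by ruling out the possibility that both $(\bar m, j_\tabtwo) \in \Inv(\tabone)$ and $(\bar m, j_\tabone) \in \Inv(\tabtwo)$ hold simultaneously.
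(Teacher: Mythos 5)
Your proof is correct, and it is exactly the argument the paper has in mind: the paper states this lemma without proof, calling it immediate from Definition~\ref{D:Total_ordering}, and your reverse induction on $\bar m$ via the factorization $\bfx^\tabone = x_{\bar m}^{|\Inv_{\bar m}(\tabone)|}\bfx^{\map(\tabone)}$ together with the rank reading of~\eqref{eqn.Inv_m} is the careful unwinding of that definition. No gaps.
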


Lemma \ref{L:monomial-tab_order} implies that the vanishing property given in Proposition \ref{P:upper_triangular_Pbasis} is, in some way, compatible with the total ordering on all monomials.  To make this compatibility precise, for each monomial $\bfx^{\delta}\in \C[\ft]$ we construct a polynomial $P_{\delta}(\mathbf{z},\mathbf{x})\in\C[\fs\times\ft]$ such that $\ev(P_\delta) = \bfx^\delta$.  This polynomial serves as an analogue of $P_\tabone(\mathbf{z},\mathbf{x})$ for $\bfx^\tabone$ when $\tabone\in\RSCT_n(\alpha)$.

Let $\delta=(\delta_1, \ldots, \delta_{n-1})$ be a composition of $n$. If $\delta$ is the inversion vector for some $\tabone\in \RSCT_n(\alpha)$, then set $P_\delta = P_\tabone$.  Otherwise,  by Lemma \ref{L:monomial-tab_order} there is a unique maximal $\tabone\in\RSCT_n(\alpha)$ such that $\bfx^{\tabone}< \bfx^{\delta}$.  Let $\gamma=(\gamma_1,\ldots,\gamma_{n-1})$ denote the inversion vector of~$\tabone$.  By definition of the total ordering on monomials, there exists an index $k$ such that $\gamma_i=\delta_i$ if $i<k$ and $\gamma_k<\delta_k$.   Let $\Inv_{\leq k}(\tabone):=\{(i,j)\in\Inv(\tabone)\ |\ i\leq k\}$.  We define the polynomial $P_\delta(\mathbf{z}, \mathbf{x})\in\C[\fs\times \ft]$  by
\begin{equation}\label{Eq:P_delta}
    P_\delta(\mathbf{z},\mathbf{x}):=\bfx^{\delta'}\cdot\,(x_k-z_{j'})\,\cdot\prod_{(i,j)\in \Inv_{\leq k}(\tabone)}(x_i-z_j)
\end{equation}
where $j'$ denotes the index of the row containing $k$ in $\tabone$ and the composition $\delta'$ is defined by
\[
\delta_i':=\begin{cases} 0 &\text{if $i<k$} \\ \delta_k-\gamma_k-1 &\text{if $i=k$} \\ \delta_i &\text{if $i>k$.}\end{cases}
\]

The following example illustrates the construction.

\begin{example}\label{ex.8}
Let $n=8$ and $\alpha=(2,3,1,2)$.  Consider the monomial $\bfx^\delta=x_3^2x_5^4x_7$ with $\delta=(0,0,2,0,4,0,1,0)$.  The maximal $\tabone\in\RSCT_n(\alpha)$ with $\bfx^\tabone<\bfx^\delta$ is
\[\begin{ytableau}
3 & 2\\
6 & 4 & 1\\
5 \\
8 & 7\\
\end{ytableau}\]
with $\bfx^\tabone=x_3^2x_5^2x_6$ and $\gamma=(0,0,2,0,2,1,0,0)$.  Note that the compositions $\delta$ and $\gamma$ agree in the first four entries with $\delta_5>\gamma_5$ so $k=5$ and $j'=3$ in this case.  We have $\Inv(\tabone)=\{(3,2), (3,4), (5,2),(5,4),(6,4)\}$, so
$$P_\delta=\underbrace{(x_5x_7)}_{\bfx^{\delta'}} \cdot (x_5-z_3)\cdot \underbrace{(x_3-z_2)(x_3-z_4)(x_5-z_2)(x_5-z_4)}_{(x_i-z_j) \textup{ with } (i,j)\in\Inv_{\leq 5}(\tabone)}.$$
\end{example}

The next lemma is a technical result proving the key computational properties of $P_\delta$.

\begin{lem}\label{L:monomial-vanishing}
Let $\bfx^{\delta}\in \C[\ft]$.  Then we obtain the following:
\begin{enumerate}
\item $\ev(P_\delta)=\bfx^\delta$, and
\item $\phi_{w_\tabtwo}(P_\delta)=0$ for all $\tabtwo\in \RSCT_n(\alpha)$ such that $\bfx^\tabtwo<\bfx^\delta$, where $\phi_{w_\tabtwo}$ is the localization map defined in~\eqref{Eq:localization_map} above.
\end{enumerate}
\end{lem}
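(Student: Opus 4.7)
For part (1), I would carry out a direct evaluation. The map $\ev$ sends each $z_i$ to $0$, so every factor $(x_i - z_j)$ collapses to $x_i$, and the distinguished factor $(x_k - z_{j'})$ collapses to $x_k$. Since $\gamma_i$ is the number of inversions of $\tabone$ of the form $(i, \ast)$, this gives
\[
\ev(P_\delta) \;=\; \mathbf{x}^{\delta'} \cdot x_k \cdot \prod_{i=1}^{k} x_i^{\gamma_i}.
\]
Substituting the definition of $\delta'$ together with the identity $\gamma_i = \delta_i$ for $i < k$ then simplifies the right-hand side to $\mathbf{x}^\delta$.

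For part (2), fix $\tabtwo \in \RSCT_n(\alpha)$ with $\mathbf{x}^\tabtwo < \mathbf{x}^\delta$. Since $\tabone$ is, by construction, the maximal element of $\RSCT_n(\alpha)$ satisfying $\mathbf{x}^\tabone < \mathbf{x}^\delta$, Lemma~\ref{L:monomial-tab_order} yields $\tabtwo \leq \tabone$. My strategy is to exhibit a factor of $P_\delta$ which vanishes under the substitution $(\mathbf{z},\mathbf{x}) \mapsto (h, h_\tabtwo)$. When $\tabtwo = \tabone$, the distinguished factor $(x_k - z_{j'})$ works directly, since $k$ lies in row $j'$ of $\tabone$ and hence $(h_\tabone)_k = h_{j'}$. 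When $\tabtwo < \tabone$, I would unpack the recursive Definition~\ref{D:Total_ordering} to locate the smallest integer $m_0$ whose row in $\tabone$ differs from its row in $\tabtwo$; writing $j_\tabtwo(m_0)$ for the row of $m_0$ in $\tabtwo$, the definition of the total order forces $(m_0, j_\tabtwo(m_0)) \in \Inv(\tabone)$. A case split on $m_0$ then completes the argument: if $m_0 \leq k$, this inversion belongs to $\Inv_{\leq k}(\tabone)$, so $(x_{m_0} - z_{j_\tabtwo(m_0)})$ is one of the factors of $P_\delta$ and vanishes at $h_\tabtwo$ because $m_0$ is in row $j_\tabtwo(m_0)$ of $\tabtwo$. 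If $m_0 > k$, then by minimality of $m_0$ the integer $k$ occupies row $j'$ in both $\tabone$ and $\tabtwo$, and the distinguished factor $(x_k - z_{j'})$ vanishes at $h_\tabtwo$.

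The main obstacle will be the bookkeeping in Case 2. Specifically, one must check that the first recursion step at which $\tabtwo$ and $\tabone$ diverge matches the smallest integer $m_0$ whose row differs, and that the inversion condition furnished by the total order applies to $\tabone$ itself rather than merely to an iterated sub-tableau obtained via $\map$. Both points reduce to the observation that $\map$ removes only the rightmost box of one row and therefore preserves the positions of all remaining boxes; consequently, Springer inversions of the form $(i, j)$ with $i$ strictly greater than the deleted label are preserved under $\map$. Once this structural fact is pinned down, the case analysis above is exhaustive and produces a vanishing factor of $P_\delta$ in every case.
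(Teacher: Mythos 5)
Your proposal is correct and follows essentially the same approach as the paper's proof: a direct evaluation for part (1), and for part (2) a split into the cases $\tabtwo=\tabone$ (kill via the distinguished factor $(x_k-z_{j'})$) and $\tabtwo<\tabone$ (locate the smallest label $m_0$ placed in different rows, obtain the inversion $(m_0,j_\tabtwo(m_0))\in\Inv(\tabone)$, then split on $m_0\leq k$ versus $m_0>k$). You also correctly flag the key technical subtlety — that the inversion produced by the recursive Definition~\ref{D:Total_ordering} is an inversion of $\tabone$ itself, not merely of an iterated $\map$-image — and resolve it by observing that $\map$ preserves positions of all boxes other than the one it removes. One small remark worth making: the paper's sentence asserting that the numbers $i+1,\ldots,n$ appear in the same positions of $\tabone$ and $\tabtwo$ appears to be a typo (the recursion via $\map$ processes labels from smallest to largest, so it is the labels $1,\ldots,i-1$ that occupy identical positions); your formulation via minimality of $m_0$ states the point correctly.
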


\begin{proof}
It easy to see by construction that $\ev(P_\delta)=\bfx^{\delta}$ which proves (1). If $\delta$ is the inversion vector for some $\tabone\in \RSCT_n(\alpha)$, then (2) is an immediate consequence of Proposition~\ref{P:upper_triangular_Pbasis}.  Thus we have only to prove (2) in the case that $\delta$ is not the inversion vector for some row strict composition tableau.  Let $\tabone$ be the maximal element of $\RSCT_n(\alpha)$ such that $x^\tabone<x^\delta$.

First observe that if $\tabtwo=\tabone$, then $\phi_{w_\tabtwo}(P_\delta)=0$ since the factor $(x_k-z_{j'})$ in $P_\delta$ evaluates to zero on any $(h, w_\tabone\cdot h)$ with $h\in \fs$.  Now suppose $\tabtwo<\tabone$.  By definition of the total order on $\RSCT_n(\alpha)$, there exists $(i,j)\in \Inv(\tabone)$ such that the content of the $j$-th row of $\tabtwo$ contains $i$.  Furthermore, we have that the numbers $i+1, \ldots, n$ appear in the same rows (and the same exact position) of $\tabone$ and $\tabtwo$.  If $(i,j)\in\Inv_{\leq k}(\tabone)$, i.e.~if $i\leq k$, then $\phi_{w_\tabtwo}(P_\delta)=0$.  Otherwise, if $i>k$ then the tableaux $\tabtwo$ and $\tabone$ must contain $k$ in the same row. This implies $\phi_{w_\tabtwo}(P_\delta)=0$ due to the factor $(x_k-z_{j'})$ again evaluating to zero.
\end{proof}

The following proposition tells us that the expansion of $\pi_0(\bfx^\delta)$ in the Springer monomial basis contains only monomials $x^\tabone$ for $\tabone\in \RSCT_n(\alpha)$ such that $\bfx^\delta \leq \bfx^\tabone$.  This is what we mean when we say that the Springer monomial basis is compatible with the total ordering on all monomials.
Note that the proposition is also true if we impose the graded lexicographical order on monomials since $\pi_0$ is a graded map.

\begin{prop}\label{P:monomial_expansion}
Let $\bfx^{\delta}\in \C[\ft]$.  Then
$$\pi_0(\bfx^{\delta})=\sum c_\tabone\, \bfx^\tabone$$
where the sum is over all $\tabone\in\RSCT_n(\alpha)$ such that $\bfx^\tabone\geq \bfx^{\delta}$.  In other words, if $\bfx^\tabone<\bfx^{\delta}$, then $c_\tabone=0$.
\end{prop}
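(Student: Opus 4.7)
The plan is to lift $\bfx^\delta$ to the auxiliary polynomial $P_\delta \in \C[\fs\times\ft]$ constructed just above Lemma~\ref{L:monomial-vanishing}, expand its image $\pi(P_\delta) \in \ca$ in the equivariant Springer monomial basis guaranteed by Theorem~\ref{T:P_basis}, use localization together with the vanishing property of $P_\delta$ to show that the coefficient of $P_\tabone$ is zero whenever $\bfx^\tabone < \bfx^\delta$, and then apply $\ev$ to descend to $\ca_0$.

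To carry this out, index $\RSCT_n(\alpha) = \{\tabone_1 < \cdots < \tabone_N\}$ by the total order of Definition~\ref{D:Total_ordering}, noting that by Lemma~\ref{L:monomial-tab_order} the monomials $\bfx^{\tabone_i}$ are also listed in increasing lexicographical order. Write
\[
\pi(P_\delta) \;=\; \sum_{i=1}^{N} C_i\, P_{\tabone_i}, \qquad C_i \in S(\fs^*).
\]
I would then prove by induction on $i$ that $C_i = 0$ whenever $\bfx^{\tabone_i} < \bfx^\delta$. Indeed, assuming this holds for all $j < i$ (which is automatic in this case, since then $\bfx^{\tabone_j} < \bfx^{\tabone_i} < \bfx^\delta$), apply the localization map $\phi_{w_{\tabone_i}}$ to both sides. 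By Proposition~\ref{P:upper_triangular_Pbasis}(2) the terms with $j > i$ vanish, so the right-hand side reduces to $C_i\, \phi_{w_{\tabone_i}}(P_{\tabone_i})$, while the left-hand side equals $\phi_{w_{\tabone_i}}(P_\delta) = 0$ by Lemma~\ref{L:monomial-vanishing}(2). Since $\phi_{w_{\tabone_i}}(P_{\tabone_i}) \neq 0$ by Proposition~\ref{P:upper_triangular_Pbasis}(1), we conclude $C_i = 0$.

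To finish, apply $\ev$ to the expansion. Using Lemma~\ref{L:monomial-vanishing}(1) on the left and $\ev(P_{\tabone_i}) = \bfx^{\tabone_i}$ on the right gives $\pi_0(\bfx^\delta) = \sum_i \ev(C_i)\, \bfx^{\tabone_i}$; setting $c_{\tabone_i} := \ev(C_i) \in \C$ then yields the claim. No step looks like a serious obstacle: the real content lies in the construction of $P_\delta$ and the verification of its localization behavior (already carried out in Lemma~\ref{L:monomial-vanishing}), after which the argument amounts to a clean upper-triangular solve. As an alternative, one could invoke Theorem~\ref{T:P_basis_det_formula} directly, observing that when $\bfx^{\tabone_k} < \bfx^\delta$ every entry in the top row of the defining matrix $[a(i,j+1)]_0^{k-1}$ is of the form $\phi_{w_{\tabone_j}}(P_\delta)$ with $\bfx^{\tabone_j} < \bfx^\delta$ and hence vanishes, forcing the coefficient $C_k$ to be zero.
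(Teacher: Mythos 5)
Your proposal is correct and follows essentially the same route as the paper: lift $\bfx^\delta$ to $P_\delta$, expand $\pi(P_\delta)$ in the basis $\{P_\tabone\}$, and combine Lemma~\ref{L:monomial-vanishing} with the upper-triangularity of Proposition~\ref{P:upper_triangular_Pbasis} to kill the coefficients indexed by tableaux with $\bfx^\tabone<\bfx^\delta$ before applying $\ev$. The only cosmetic difference is that you unpack the triangular solve as an explicit induction, whereas the paper cites Theorem~\ref{T:P_basis_det_formula} (equivalently equation~\eqref{Eq:inverse_coefficent_formula}) for the same conclusion --- an alternative you yourself note.
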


\begin{proof}
Let $\bfx^{\delta}\in \C[\ft]$ and note that  if $\bfx^\delta = \bfx^\tabone$ for some $\tabone\in \RSCT_n(\alpha)$, then the proposition is trivial.  We therefore assume that $\bfx^\delta \neq \bfx^\tabone$ for any $\tabone\in \RSCT_n(\alpha)$, i.e., that $\delta$ is not the inversion vector for any row strict composition tableaux of shape $\alpha$.  Consider the polynomial $P_\delta\in \C[\fs\times\ft]$ as defined in equation~\eqref{Eq:P_delta} and write
\[
\pi(P_\delta)=\sum_{\tabone'} C_{\tabone'}\, P_{\tabone'}\in \ca.
\]
Let $\tabone\in\RSCT_n(\alpha)$ be the unique maximal tableau for which $\bfx^{\tabone}< \bfx^{\delta}$.  Theorem~\ref{T:P_basis_det_formula} and Lemma~\ref{L:monomial-vanishing} together imply $C_\tabtwo=0$ for all $\tabtwo\leq \tabone$. (Note that this fact also follows from equation~\eqref{Eq:inverse_coefficent_formula}).   Again by Lemma~\ref{L:monomial-vanishing}, we have $\ev(P_\delta)=\bfx^\delta$ and hence $c_\tabtwo=\ev(C_\tabtwo)=0$ for all $\tabtwo\leq \tabone$.
\end{proof}

We demonstrate Proposition~\ref{P:monomial_expansion} with an example.

\begin{example}\label{Ex:P_delta_expansion}
Let $n=6$, $\alpha=(3,3)$, and $\delta=(0,1,1,0,1)$.  The $\bfx^{\delta}=x_2x_3x_5$ and $P_\delta=(x_2-z_2)(x_3-x_2)(x_5-z_2)$. The tableaux
\[
\tabone=\begin{ytableau} 3 & 2 & 1\\ 6 & 5 & 4\\ \end{ytableau}
\]
is the unique maximal element of $\RSCT_6(\alpha)$ such that $\bfx^{\tabone}< \bfx^{\delta}$.  In this case, we have $\bfx^{\tabone}=x_2x_3$.  If we write $\pi(P_\delta)=\sum_{\tabone'} C_{\tabone'}\, P_{\tabone'}$, then the coefficients $C_{\tabone'}$ can be computed using Theorem \ref{T:P_basis_det_formula}.  The nonzero coefficients are listed in the table appearing in Figure 2.
From this information, we immediately get that
$$\pi_0(\bfx^\delta)=-(x_1x_3x_5+x_1x_2x_5+x_1x_2x_3).$$
If we label $\RSCT_6(\alpha)=\{\tabone_1<\cdots< \tabone_{20}\}$ with respect to the total order, then $\tabone=\tabone_{10}$ and the set of tableaux corresponding to nonzero coefficients are:
$$\{\tabone_{11}, \tabone_{12},\tabone_{14},\underline{\tabone_{15}},\tabone_{17},\underline{\tabone_{18}},\underline{\tabone_{20}}\}.$$
The underlined tableaux correspond to nonzero constant coefficients.

\begin{figure}[h]
\[
\begin{tabular}{c|ccccccc}
$\tabone'$ &
\begin{ytableau} 5 & 3 & 2\\ 6 & 4 & 1\\ \end{ytableau}&
\begin{ytableau} 6 & 3 & 2\\ 5 & 4 & 1\\ \end{ytableau}&
\begin{ytableau} 5 & 4 & 2\\ 6 & 3 & 1\\ \end{ytableau}&
\begin{ytableau} 6 & 4 & 2\\ 5 & 3 & 1\\ \end{ytableau}&
\begin{ytableau} 5 & 4 & 3\\ 6 & 2 & 1\\ \end{ytableau}&
\begin{ytableau} 6 & 4 & 3\\ 5 & 2 & 1\\ \end{ytableau}&
\begin{ytableau} 6 & 5 & 4\\ 3 & 2 & 1\\ \end{ytableau}\\ \\ \hline \\
$\bfx^{\tabone'}$& $x_1$& $x_1x_5$ & $x_1x_3$ & $x_1x_3x_5$ & $x_1x_2$ & $x_1x_2x_5$ & $x_1x_2x_3$ \\ \\
$C_{\tabone'}$ & $-(z_2-z_1)^2$& $(z_2-z_1)$ & $(z_2-z_1)$ & $-1$ & $(z_2-z_1)$ & $-1$ & $-1$ \\
\end{tabular}
\]
\begin{caption}{Coefficients of $\pi(P_\delta)$ for $n=6$, $\alpha=(3,3)$, and $\delta = (0,1,1,0,1)$.}
\end{caption}
\end{figure}
\end{example}

One immediate consequence of Proposition \ref{P:monomial_expansion} is the following.

\begin{cor}Let $\bfx^{\delta}\in \C[\ft]$ and let $F\in \ca$ such that $\ev(F)=\bfx^{\delta}$.  Write
$$F=\sum_{\tabone\in\RSCT_n(\alpha)} C_\tabone\, P_\tabone.$$
If $C_\tabone\neq 0$ and $\mathbf{x}^{\tabone}<\mathbf{x}^\delta$, then $k=\deg(\mathbf{x}^{\tabone})<\deg(\mathbf{x}^{\delta})=m$ and $C_\tabone \in S^{m-k}(\fs^*)$.
\end{cor}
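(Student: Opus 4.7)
The plan is to reduce the corollary to Proposition~\ref{P:monomial_expansion} by combining the graded $S(\fs^*)$-module structure of $\ca$ with the evaluation map $\ev:\ca\to\ca_0$.

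First I would observe that we may assume without loss of generality that $F$ is homogeneous of degree $m$. Indeed, $\ca$ is graded, $\ev$ is a graded homomorphism, and $\bfx^\delta$ is homogeneous of degree $m$, so the hypothesis $\ev(F)=\bfx^\delta$ only constrains the degree-$m$ component of $F$; this component gives the relevant coefficients $C_\tabone$ of the stated degree. By Theorem~\ref{T:P_basis} the set $\{P_\tabone\}$ is a homogeneous $S(\fs^*)$-module basis of $\ca$ with $\deg(P_\tabone)=\deg(\bfx^\tabone)=k$. Thus in the expansion $F=\sum C_\tabone P_\tabone$, homogeneity of $F$ forces each nonzero $C_\tabone$ to be homogeneous of degree $m-k$, i.e.\ $C_\tabone\in S^{m-k}(\fs^*)$. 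This already establishes the second claim, and in particular shows $k\leq m$.

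Next I would apply $\ev$ to both sides. By Lemma~\ref{lem.GPbasis} we have $\ev(P_\tabone)=\bfx^\tabone$, and since $\ev(C_\tabone)=C_\tabone(0)$ we obtain
$$\bfx^\delta \;=\; \ev(F) \;=\; \sum_{\tabone\in\RSCT_n(\alpha)} C_\tabone(0)\,\bfx^\tabone$$
in $\ca_0$. By Proposition~\ref{P:monomial_expansion}, the expansion of $\bfx^\delta$ in the Springer monomial basis $\{\bfx^\tabone\}$ involves only $\tabone$ with $\bfx^\tabone\geq\bfx^\delta$. Since $\{\bfx^\tabone\}_{\tabone\in\RSCT_n(\alpha)}$ is a basis of $\ca_0$, uniqueness of the expansion forces $C_\tabone(0)=0$ whenever $\bfx^\tabone<\bfx^\delta$.

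To finish, suppose $C_\tabone\neq 0$ and $\bfx^\tabone<\bfx^\delta$. If $k=m$, then $C_\tabone\in S^0(\fs^*)=\C$ is a nonzero constant, and so $C_\tabone(0)=C_\tabone\neq 0$, contradicting the previous paragraph. Hence $k<m$, completing the proof. There is no real obstacle here: the corollary is essentially a repackaging of Proposition~\ref{P:monomial_expansion} once the graded structure of $\ca$ pins down $\deg(C_\tabone)=m-k$; the only point requiring care is the reduction to the homogeneous case so that this degree identity holds uniformly across all terms.
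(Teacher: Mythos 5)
Your proof is correct and captures exactly the intended argument: the paper presents this corollary as an "immediate consequence" of Proposition~\ref{P:monomial_expansion} with no separate proof, and the route you take (homogeneity pins down $\deg C_\tabone=m-k$; applying $\ev$ and invoking Proposition~\ref{P:monomial_expansion} forces $C_\tabone(0)=0$ for $\bfx^\tabone<\bfx^\delta$; if $k=m$ then $C_\tabone$ is a nonzero constant, a contradiction) is the natural one. The only phrasing to tighten is the opening ``WLOG $F$ is homogeneous'': this is not really a reduction you are free to make but an implicit hypothesis of the corollary --- if $F$ were allowed to be non-homogeneous one could perturb it by, say, $z_1 P_\tabone$ without changing $\ev(F)$, and then $C_\tabone$ would fail to lie in a single graded piece $S^{m-k}(\fs^*)$. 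So it is better to say the corollary tacitly assumes $F\in\ca^m$ (which is how the authors use it, e.g.\ $F=\pi(P_\delta)$), and proceed from there; once that is understood, your argument is complete and correct.
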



\subsection{Schubert polynomials}

The set of Schubert polynomials $\{\mathfrak{S}_w(\bfx)\mid w\in W\}$ in $\C[\ft]$ is an important collection of polynomials.  Note that the map $\pi_0:\C[\ft]\rightarrow \ca_0$ factors through $\phi^*_0:\ca'_0\rightarrow \ca_0$ where $\ca_0'\simeq H^*(\B)$ (see Theorem \ref{T:KP_theorem}) and hence $\mathfrak{S}_w(\bfx)$ may be viewed as a polynomial in $\ca_0'$.  It is widely known that Schubert polynomials are representatives for the Schubert classes in $H^*(\B)$ and form a basis of the cohomology ring.  The main result of this section is Theorem \ref{T:Schubert_Basis} which states there is a natural subset $W(\alpha)\subseteq W$ such that the set of images $\{\pi_0(\mathfrak{S}_w)\mid w\in W(\alpha)\}$ form a basis for the cohomology of the Springer fiber $\ca_0 \simeq H^*(\B^\alpha)$. Corollary~\ref{cor.equiv-main-thm} in this section proves an equivariant version of this statement and generalizes results of Harada--Tymoczko \cite{HT17} and Harada--Dewitt \cite{DH}.

Given a permutation $w\in W$, we recall that the \textbf{inversion set} of $w$ is
\[
\Inv(w):= \{(i<j) \mid w(i)>w(j)\}.
\]
Recall that the \textbf{length} of a permutation $w$ is $\ell(w) = |\Inv(w)|$.  The \textbf{Lehmer code} of $w$ is defined as the sequence $(\gamma_1(w),\gamma_2(w), \ldots, \gamma_{n-1}(w), \gamma_n(w))$ where $\gamma_k(w)$ denotes the number of inversions of $w$ of the form $(k,j)$ for some $j$.  Given any permutation $w$ and $k\in [n]$, it is clear that $0\leq \gamma_k(w) \leq n-k$.  On the other hand, given a sequence of nonnegative integers $(\gamma_1, \gamma_2, \ldots, \gamma_{n-1}, \gamma_n)$ such that $0\leq \gamma_k\leq n-k$, the following well known lemma defines an explicit permutation $w$ with Lehmer code $(\gamma_1, \gamma_2, \ldots, \gamma_{n-1}, \gamma_n)$. See, for example, \cite[Ch.~2]{Bjorner-Brenti} for a proof.

\begin{lem}\label{lem.inversions} Suppose  $(\gamma_1, \gamma_2, \ldots, \gamma_{n-1},\gamma_n)$ is a sequence of nonnegative integers such that $\gamma_k \leq n-k$ for $k\in [n]$.  For each such $k$, define
\[
w_k:=s_{k+\gamma_k-1}s_{k+\gamma_k-2}\ldots  s_{k+1}s_k
\]
if $\gamma_k\neq 0$, and $w_k=e$ if $\gamma_k=0$.  Then $w=w_1w_2\ldots w_{n-1}\in W$ has Lehmer code $(\gamma_1, \gamma_2, \ldots, \gamma_{n-1},\gamma_n=0)$, and $w$ is unique with respect to this property.
\end{lem}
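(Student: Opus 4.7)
The plan is to prove uniqueness by a counting argument and existence by induction on $n$. For uniqueness, observe that the set of tuples $(\gamma_1, \ldots, \gamma_n)$ with $0 \leq \gamma_k \leq n-k$ has cardinality $n\cdot (n-1) \cdots 2 \cdot 1 = n! = |W|$, and the Lehmer code is a well-defined map from $W$ into this set. Once the existence portion shows the map is surjective, bijectivity between finite sets of equal size gives uniqueness automatically.

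For existence, I would induct on $n$, with the trivial base case $n=1$. For the inductive step, factor $w = w_1 \cdot v$ where $v := w_2 w_3 \cdots w_{n-1}$. A direct computation (expanding $s_{\gamma_1} \cdots s_2 s_1$) shows that $w_1$ is the cyclic permutation $1 \mapsto \gamma_1 + 1$, $j \mapsto j - 1$ for $2 \leq j \leq \gamma_1 + 1$, identity on $\{\gamma_1+2, \ldots, n\}$ (with $w_1 = e$ when $\gamma_1 = 0$). Each factor of $v$ involves only simple transpositions $s_k$ with $k \geq 2$, so $v$ fixes $1$ and permutes $\{2, \ldots, n\}$. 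Reindexing $j \leftrightarrow j - 1$ identifies $v$ with an element $\tilde v \in S_{n-1}$; substituting $k \mapsto k-1$ in the defining product expresses $\tilde v$ as precisely the element the lemma constructs for the shifted sequence $(\gamma_2, \ldots, \gamma_n)$ in $S_{n-1}$, where the constraints $\gamma_{k+1} \leq n-(k+1) = (n-1)-k$ match those required by the lemma in rank $n-1$. By the inductive hypothesis, $\gamma_k(v) = \gamma_k$ for $k \geq 2$, while $\gamma_1(v) = 0$ since $v(1) = 1$.

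It remains to verify that composition with $w_1$ inserts the correct count at position $1$ without disturbing the inversions among positions $\geq 2$. For position $1$: since $w(1) = w_1(1) = \gamma_1 + 1$ and $v$ bijects $\{2,\ldots, n\}$ onto itself, the $j > 1$ with $w(j) < w(1)$ are precisely those with $v(j) \in \{2, \ldots, \gamma_1 + 1\}$, of which there are exactly $\gamma_1$, so $\gamma_1(w) = \gamma_1$. The key observation for positions $k \geq 2$ is that $w_1$ is \emph{order-preserving} on $\{2, \ldots, n\}$: it maps $\{2, \ldots, \gamma_1 + 1\}$ bijectively and increasingly onto $\{1, \ldots, \gamma_1\}$, fixes $\{\gamma_1+2, \ldots, n\}$, and these two image blocks remain disjoint and correctly ordered. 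Hence $w(k) > w(j) \iff v(k) > v(j)$ for all $k, j \geq 2$, giving $\gamma_k(w) = \gamma_k(v) = \gamma_k$. The main technical friction I anticipate is the bookkeeping in identifying $\tilde v$ with the $(n-1)$-version of the construction, but this reduces to a routine index shift on simple reflections.
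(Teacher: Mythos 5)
Your proof is correct. Note that the paper does not actually prove this lemma: it is stated as "well known" with a citation to Bj\"orner--Brenti, so there is no in-paper argument to compare against. Your write-up is essentially the standard proof of the bijection between permutations and their codes: the counting argument ($\prod_{k=1}^{n}(n-k+1)=n!=|W|$ plus surjectivity from the existence half) disposes of uniqueness cleanly, and the induction correctly isolates the two facts that make the existence step work --- that $w_1=s_{\gamma_1}\cdots s_1$ is the cycle sending $1\mapsto\gamma_1+1$ and $j\mapsto j-1$ for $2\le j\le\gamma_1+1$, and that $w_1$ restricted to $\{2,\ldots,n\}$ is order-preserving, so left-multiplication by $w_1$ creates exactly $\gamma_1$ inversions at position $1$ and none elsewhere. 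The index bookkeeping identifying $v=w_2\cdots w_{n-1}$ with the rank-$(n-1)$ construction for $(\gamma_2,\ldots,\gamma_n)$ checks out (the constraint $\gamma_{k+1}\le(n-1)-k$ is exactly what the inductive hypothesis needs), and your composition convention $w(i)=w_1(v(i))$ is consistent with the paper's usage, as one can verify against the example following the lemma. No gaps.
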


We now describe the set $W(\alpha)\subseteq W$.  This subset is analogous to the set of \textit{Schubert points} defined by the first author and Tymoczko in \cite{PT19}, although our conventions differ, as discussed in Remark~\ref{rem.conventions} above.  To any $\tabone \in\RSCT_n(\alpha)$ we define $u_\tabone$ to be the unique permutation (as defined in Lemma~\ref{lem.inversions}) such that the inversion vector of $\tabone$ equals the Lehmer code of $u_\tabone$.  Define
$$W(\alpha):=\{u_\tabone \mid \tabone \in \RSCT_n(\alpha)\}.$$
This collection of permutations has the property that the number of $w\in W(\alpha)$ with Bruhat length $k$ is precisely $\dim(H^{2k}(\B^\alpha))$.  Thus the set $W(\alpha)$ is the output of a successful game of \textit{Betti pinball} in the sense of~\cite{HT17}.

\begin{example} Let $n=8$ and $\alpha=(2,3,1,2)$. Take $\tabone\in \RSCT_n(\alpha)$ to be as in Example~\ref{ex.8}, and recall that $\tabone$ has exponent vector $\gamma = (0,0,2,0,2,0,1,0)$.  Applying Lemma~\ref{lem.inversions} we have $u_\tabone= s_4s_3 s_6s_5 s_7$ (where $w_{3}=s_4s_3$, $w_5=s_6s_5$, $w_7 = s_7$).  The Lehmer code of $u_{\tabone}$ is~$\gamma$.
\end{example}
We can now state the main theorem of this section.

\begin{thm}\label{T:Schubert_Basis}
The set $\{\pi_0(\mathfrak{S}_{w}(\bfx)) \mid w\in W(\alpha)\}$ forms an additive basis of $H^*(\B^\alpha)$.
\end{thm}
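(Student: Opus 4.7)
The plan is to show that, when indexed by the total order on $\RSCT_n(\alpha)$ from Definition~\ref{D:Total_ordering}, the transition matrix from $\{\pi_0(\mathfrak{S}_{u_\tabone}(\bfx)) \mid \tabone \in \RSCT_n(\alpha)\}$ to the Springer monomial basis $\GP_\lambda$ is upper triangular with $1$'s on the diagonal, hence invertible. Since $|W(\alpha)| = |\RSCT_n(\alpha)| = \dim H^*(\B^\alpha)$, this immediately implies that the images $\{\pi_0(\mathfrak{S}_w(\bfx)) \mid w\in W(\alpha)\}$ form an additive basis of $\ca_0 \simeq H^*(\B^\alpha)$.

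The key ingredient imported from outside is the classical leading-monomial property of Schubert polynomials (see, e.g., \cite{Manivel}): for any $w \in S_n$, writing $\gamma(w) = (\gamma_1(w), \ldots, \gamma_n(w))$ for the Lehmer code of $w$, one has
\[
\mathfrak{S}_w(\bfx) = \bfx^{\gamma(w)} + \sum_\delta a_\delta\, \bfx^\delta,
\]
where the sum ranges over compositions $\delta$ with $\bfx^\delta > \bfx^{\gamma(w)}$ in the lex order of Section~\ref{sec.monomial}, and the leading coefficient equals $1$. In other words, $\bfx^{\gamma(w)}$ is the lex-\emph{smallest} monomial of $\mathfrak{S}_w(\bfx)$. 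By the very construction of $u_\tabone$, the Lehmer code $\gamma(u_\tabone)$ equals the inversion vector of $\tabone$, so $\bfx^{\gamma(u_\tabone)} = \bfx^\tabone$.

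Applying $\pi_0$ to this expansion, the leading term contributes $\pi_0(\bfx^\tabone) = \bfx^\tabone$ since $\bfx^\tabone$ is itself a basis element of $\ca_0$. For each $\bfx^\delta$ appearing in $\mathfrak{S}_{u_\tabone}(\bfx)$ with $\bfx^\delta > \bfx^\tabone$, Proposition~\ref{P:monomial_expansion} gives
\[
\pi_0(\bfx^\delta) = \sum_{\bfx^\tabtwo \geq \bfx^\delta} c_{\delta, \tabtwo}\, \bfx^\tabtwo,
\]
and every Springer basis element in this expansion satisfies $\bfx^\tabtwo \geq \bfx^\delta > \bfx^\tabone$. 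Summing all contributions yields
\[
\pi_0(\mathfrak{S}_{u_\tabone}(\bfx)) = \bfx^\tabone + \sum_{\bfx^\tabtwo > \bfx^\tabone} C_\tabtwo\, \bfx^\tabtwo,
\]
and Lemma~\ref{L:monomial-tab_order} identifies $\bfx^\tabtwo > \bfx^\tabone$ with $\tabtwo > \tabone$ in the total order on $\RSCT_n(\alpha)$. This yields the desired upper triangular transition matrix with unit diagonal.

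The main obstacle is essentially bookkeeping: verifying that the Lascoux--Sch\"utzenberger convention (which identifies $\bfx^{\gamma(w)}$ as the lex-smallest monomial of $\mathfrak{S}_w(\bfx)$) aligns with the $\geq$ direction in Proposition~\ref{P:monomial_expansion}. Once that alignment is confirmed, Proposition~\ref{P:monomial_expansion} automatically pushes every non-leading monomial of $\mathfrak{S}_{u_\tabone}$ forward to strictly greater Springer basis elements, and both the triangularity and the explicit value $1$ on the diagonal follow with no further computation.
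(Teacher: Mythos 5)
Your proof follows the paper's strategy exactly: expand each $\mathfrak{S}_{u_\tabone}(\bfx)$ with $\bfx^\tabone$ as its lex-smallest monomial, push the remaining monomials through Proposition~\ref{P:monomial_expansion} to land strictly above $\bfx^\tabone$ in the Springer basis, and conclude unitriangularity of the transition matrix. Steps 2--4 of your argument are correct and match the paper verbatim.

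The genuine gap is in the first step, which you treat as a citation to a ``classical'' fact. The claim that $\bfx^{\gamma(w)}$ is the \emph{lex-smallest} monomial of $\mathfrak{S}_w(\bfx)$ (with lex order defined by comparison at the \emph{smallest} differing index, so that $x_1 > x_2 > \cdots$) is not directly available from the standard references. What Billey--Haiman \cite{Billey-Haiman1995} (Lemma~4.11) and Macdonald \cite{McD91} ((4.16)) establish is that $\bfx^{\gamma(w)}$ is the \emph{reverse-lex-maximal} monomial of $\mathfrak{S}_w$, and as the paper points out in Remark~\ref{rem.monomials}, reverse lexicographical order is not the converse of lexicographical order, so that result does not directly imply the lex-smallest property. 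Manivel's book, which you cite, does not contain the needed statement in the required form either. The paper therefore proves the lex-smallest property itself as Lemma~\ref{P:Schubert_upper_triangular}, running an induction on the Lehmer code using Monk's formula~\eqref{Eq:Monks_formula} and carefully choosing the \emph{smallest} index $k_0$ with $\gamma_{k_0}\neq 0$ (as opposed to the largest, which is what Macdonald's argument uses). You would need to supply this argument, or a precise reference containing exactly the lex-smallest statement; waving at Manivel does not close the gap.
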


Before we prove the theorem, we review the definition Schubert polynomials given by Lascoux and Sch\"{u}tzenberger in \cite{LaSc81} and prove a key property about their monomial expansions.  First recall Newton's divided difference operator $\partial_i:\C[\ft]\rightarrow \C[\ft]$ defined as:
$$\partial_i(f):=\frac{f-s_i(f)}{x_i-x_{i+1}}$$
where $s_i(f)$ is the polynomial obtained by swapping the variables $x_i$ and $x_{i+1}$ in $f$.  The Schubert polynomials are defined recursively by first setting $$\mathfrak{S}_{w_0}(\bfx):=x_1^{n-1}x_2^{n-2}\cdots x_{n-1}$$ where $w_0=[n,n-1,\ldots,1]$ denotes the longest permutation in $W$ and then defining
\begin{eqnarray}\label{eqn.schubertdef}
\mathfrak{S}_{w}(\bfx):=\partial_i(\mathfrak{S}_{ws_i}(\bfx))
\end{eqnarray}
if $\ell(ws_i)=\ell(w)+1$.
Since the divided difference operators $\partial_i$ satisfy the braid relations on $W$, \eqref{eqn.schubertdef} is well defined.  It was proved separately by Billey, Jockusch and Stanley in \cite{BJS1993}, and Fomin and Stanley in \cite{FoSt94}, that Schubert polynomials are nonnegative sums of monomials.  For more details on Schubert polynomials and their properties, see~\cite{McD91,Manivel}.

\begin{example} Let $n=4$ and $w=[1,4,3,2]=s_2s_3s_2\in W=S_4$.  We have
$$\mathfrak{S}_{[1,4,3,2]}(\bfx) = \partial_2\partial_3\partial_2(x_1^3x_2^2x_3)=x_2^2x_3+x_1^2x_3+x_1^2x_2+x_1x_2x_3+x_1x_2^2 .$$
Note that $x_2^2x_3$ is the minimal term appearing in the expansion above with respect to our monomial ordering, and $x_2^2x_3 = \mathbf{x}^\gamma$ where $\gamma=(0,2,1,0)$ is the Lehmer code of $w$.
\end{example}

As noted in the example above, the smallest monomial term (with respect to the lexicographical order) appearing in $\mathfrak{S}_w(\mathbf{x})$ is the monomial $\mathbf{x}^\gamma$, where $\gamma$ is a Lehmer code of $w$.  We now prove that this property is true for all Schubert polynomials.

\begin{lem}\label{P:Schubert_upper_triangular}
Let $w\in W$ and $\gamma=(\gamma_1, \gamma_2, \ldots, \gamma_{n-1}, \gamma_n=0)$ denote the Lehmer code of $w$.  Then the Schubert polynomial $\mathfrak{S}_w(\bfx)$ has the expansion:
\begin{eqnarray}\label{eqn.schubert}
\mathfrak{S}_w(\bfx)=\bfx^{\gamma}+\sum_\delta c_{\delta}\bfx^{\delta}
\end{eqnarray}
where $c_{\delta}\neq 0$ implies that $\bfx^{\gamma}<\bfx^{\delta}$.
\end{lem}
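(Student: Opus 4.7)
The plan is to argue by induction on $\ell(w_0) - \ell(w)$ using the recursion $\mathfrak{S}_w = \partial_i \mathfrak{S}_{ws_i}$ for any $i$ with $\ell(ws_i) = \ell(w)+1$. The base case $w = w_0$ is immediate, since $\mathfrak{S}_{w_0} = x_1^{n-1} x_2^{n-2} \cdots x_{n-1} = \bfx^{\gamma(w_0)}$ is a single monomial. For the inductive step, I fix $i$ with $w(i) < w(i+1)$ and set $\gamma = \gamma(w)$, $\gamma' = \gamma(ws_i)$. A direct comparison of Lehmer codes yields $\gamma'_i = \gamma_{i+1}+1$, $\gamma'_{i+1} = \gamma_i$, and $\gamma'_k = \gamma_k$ for $k \notin \{i, i+1\}$; the ascent condition $w(i) < w(i+1)$ forces $\gamma_i \leq \gamma_{i+1}$, so in particular $\gamma'_i > \gamma'_{i+1}$.

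By the inductive hypothesis, together with the monomial positivity of Schubert polynomials (Billey--Jockusch--Stanley and Fomin--Stanley), I may write
\[
\mathfrak{S}_{ws_i} = \bfx^{\gamma'} + \sum_{\delta' > \gamma'} c_{\delta'}\, \bfx^{\delta'}, \qquad c_{\delta'} \geq 0.
\]
A direct calculation of the divided difference using $\gamma'_i > \gamma'_{i+1}$ gives
\[
\partial_i \bfx^{\gamma'} = \prod_{j\ne i,i+1} x_j^{\gamma_j} \cdot \sum_{k=0}^{\gamma_{i+1}-\gamma_i} x_i^{\gamma_i+k}\, x_{i+1}^{\gamma_{i+1}-k},
\]
whose lex-smallest term (occurring at $k=0$) is exactly $\bfx^{\gamma}$ with coefficient $1$. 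This produces the required leading monomial.

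It remains to control the contributions of $c_{\delta'}\partial_i\bfx^{\delta'}$ for $\delta' > \gamma'$. I would perform a case analysis based on the first index $k_0$ at which $\delta'$ and $\gamma'$ differ. When $k_0 \ne i$, a short inspection of the expansion of $\partial_i(x_i^{\delta'_i} x_{i+1}^{\delta'_{i+1}})$ together with the inequality $\gamma_i \leq \gamma_{i+1}$ shows that every monomial appearing in $\partial_i \bfx^{\delta'}$ is strictly lex-larger than $\bfx^{\gamma}$: the coordinates below $k_0$ match those of $\gamma$, and either the $k_0$-th coordinate or the $x_i$-exponent strictly exceeds its counterpart in $\gamma$.

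The main obstacle is the case $k_0 = i$, in which $\delta'_i \geq \gamma_{i+1}+2$ while $\delta'_{i+1}$ is unconstrained. Here $\partial_i\bfx^{\delta'}$ can a priori contribute either a positive multiple of $\bfx^{\gamma}$ (inflating its coefficient above $1$) or monomials that are lex-smaller than $\bfx^{\gamma}$. To close this case I plan to invoke the pipe-dream (RC-graph) formula of Billey--Jockusch--Stanley and Fomin--Stanley for $\mathfrak{S}_{ws_i}$, and to show that no reduced pipe dream for $ws_i$ produces a weight of the problematic form, forcing the corresponding $c_{\delta'}$ to vanish. Alternatively, the entire lemma admits a clean proof directly from the pipe-dream formula: the ``bottom'' pipe dream of $w$ uniquely realizes $\bfx^{\gamma(w)}$ with coefficient $1$, and every other reduced pipe dream of $w$ produces a monomial whose weight is strictly lex-larger than $\gamma(w)$ via the ladder/chute-move structure on the set of pipe dreams.
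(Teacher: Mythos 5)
Your main line of attack—top-down induction via $\mathfrak{S}_w=\partial_i\mathfrak{S}_{ws_i}$—runs into a genuine obstacle that you have correctly located but not resolved, namely the case $k_0=i$. There, a monomial $\bfx^{\delta'}$ of $\mathfrak{S}_{ws_i}$ with $\delta'_j=\gamma'_j$ for $j<i$, $\delta'_i>\gamma'_i$, and $\delta'_i>\delta'_{i+1}$ contributes, under $\partial_i$, a monomial whose $x_i$-exponent equals $\delta'_{i+1}$; if $\delta'_{i+1}<\gamma_i$ this term is strictly lex-smaller than $\bfx^\gamma$, and nothing in the lex inequality $\delta'>\gamma'$ forbids this. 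Such terms must ultimately cancel against contributions from other monomials of $\mathfrak{S}_{ws_i}$, but your write-up does not exhibit or even argue the cancellation; it only announces a plan to ``invoke'' the pipe-dream formula and ``show that no reduced pipe dream for $ws_i$ produces a weight of the problematic form,'' which is precisely the missing step. As it stands, the divided-difference route is incomplete.

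Your parenthetical alternative---that the bottom pipe dream of $w$ has weight $\gamma(w)$, that ladder moves strictly increase the weight in lex order, and that every reduced pipe dream of $w$ is reachable from the bottom one by ladder moves (a theorem of Bergeron and Billey)---is a correct and, if written out with the Bergeron--Billey citation, self-contained proof. It is, however, a genuinely different argument from the paper's. The paper instead inducts \emph{bottom-up} on the Lehmer code of $w$ (first on $\ell(w)$, then on lex order of codes of fixed length) and uses Monk's formula
\[
x_{k_0}\mathfrak{S}_v=\sum_{j>k_0}\mathfrak{S}_{vt_{k_0,j}}-\sum_{j<k_0}\mathfrak{S}_{vt_{j,k_0}}
\]
with $v=wt_{k_0,j_0}$ chosen from the \emph{smallest} nonzero entry of the code, solving for $\mathfrak{S}_w$ and checking directly that every other permutation appearing has a lex-larger code. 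That approach never applies a divided difference to a single monomial, so it never encounters the cancellation problem you were trying to avoid; on the other hand, it produces some negative signs in intermediate expressions, whereas the pipe-dream argument is manifestly positive. If you intend to submit the pipe-dream version, you should replace the sketch with the actual statement of the ladder-move generation theorem and verify the lex-monotonicity of a ladder move; if you intend to salvage the divided-difference version, you need a concrete argument (not just a plan) for why the $k_0=i$ contributions either do not occur or cancel.
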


\begin{proof} We proceed by induction on the Lehmer code of $w$, which we interpret as the exponent vector of a monomial.  In particular, we induct on degree (i.e.~the number of inversions of $w$) and use the converse of lexicographical order to induct on the Lehmer codes of a given degree.  When $\ell(w)=0$ then $w=e$ so $\mathfrak{S}_e=1$ and the desired expansion of $\mathfrak{S}_w$ holds trivially in this case.

We now assume $\ell(w)>0$ and that there is an expansion of the form~\eqref{eqn.schubert} for every Schubert polynomial $\mathfrak{S}_v(\mathbf{x})$ with $\ell(v)<\ell(w)$ or $\ell(v)= \ell(w)$ and such that the Lehmer code of $v$ is greater than that of $w$.

For any $j<k$, let $t_{j,k}:=s_js_{j+1}\cdots s_{k-2}s_{k-1}s_{k-2}\cdots s_{j+1}s_j$ denote the transposition which swaps $j$ and $k$. Monk's formula for Schubert polynomials implies that for any $k<n$ and $u\in W$, we have
\begin{equation}\label{Eq:Monks_formula}
x_k\mathfrak{S}_{u}(\bfx)=\sum_{\substack{j>k \\ \ell(ut_{k,j})=\ell(u)+1}}\mathfrak{S}_{ut_{k,j}}(\bfx)-\sum_{\substack{j<k\\ \ell(ut_{j,k})=\ell(u)+1}}\mathfrak{S}_{ut_{j,k}}(\bfx).
\end{equation}
Equation~\eqref{Eq:Monks_formula} appears in~\cite[Equation (4.15')]{McD91} and in~\cite[Exercise 2.7.3]{Manivel}.

Let $k_0$ denote the smallest value for which $\gamma_{k_0}\neq 0$ and $(k_0,j_0)\in\Inv(w)$ denote the unique inversion such that $w(k_0)=w(j_0)+1$.  Note that such an inversion exists as $w(i)=i$ for all $i<k_0$ by our assumptions.  In particular, this implies $w(k_0)>k_0$ and $\gamma_{k_0} = w(k_0)-k_0$.
Define $v:=wt_{k_0,j_0}$ and $v':=vt_{k_0-1, k_0}$ (if $k_0=1$, then we disregard $v'$).  In particular, note that $\ell(v)=\ell(w)-1$ and $\ell(v')=\ell(v)+1=\ell(w)$.  Furthermore, our choice of $k_0$ implies that $v'$ is the unique permutation such that $v' = vt_{j,k_0}$ with $j<k_0$ and $\ell(v t_{j,k_0})=\ell(v)+1$. Applying Equation~\eqref{Eq:Monks_formula} with $u=v$ and $k=k_0$ now gives us
\begin{equation}\label{Eq:Monks_formula2}
\mathfrak{S}_{w}(\bfx)=x_{k_0}\mathfrak{S}_{v}(\bfx)+\mathfrak{S}_{v'}(\bfx)-\sum_{w'}\mathfrak{S}_{w'}(\bfx)
\end{equation}
where the sum is taken over all $w'=vt_{k_0,j}$ with $\ell(w')=\ell(w)$, $j>k_0$ and $j\neq j_0$.

Let $\delta$ and $\delta'$ denote codes of $v$ and $v'$, respectively, and let $\gamma'$ denote the code of $w'$ for some $w'$ appearing in the sum.  It is easy to check that
$$\delta_i=\begin{cases}\gamma_i & \text{if $i\neq k_0$}\\ \gamma_i-1 & \text{if $i=k_0$}\end{cases}\quad\text{and}\quad \delta'_i=\begin{cases}\gamma_i & \text{if $i\notin \{k_0-1,k_0\}$}\\ \gamma_{k_0} & \text{if $i=k_0-1$}\\ 0 & \text{if $i=k_0$.}\end{cases}$$
In particular, $\bfx^\gamma=x_{k_0}\bfx^{\delta}$ and $\bfx^\gamma <\bfx^{\delta'}$.

We now have only to show that $\bfx^{\gamma}< \bfx^{\gamma'}$. To start, recall that $w'=vt_{k_0,j}$ with $\ell(w') = \ell(v)+1$ and $k_0<j$.  This implies $v(j)>v(k_0) = w(k_0)-1$. We furthermore know that $j\neq j_0$, so $v(j)\neq v(j_0)=w(k_0)$.  Thus $w'(k_0) = v(j) > w(k_0)$.  By construction,  $\gamma'_i=\gamma_i=0$ for all $i<k_0$ which implies $w'(i)=i$ for all $i<k_0$.  Since $w'(k_0)>w(k_0)>k_0$, we obtain
\[
\gamma_{k_0}' = w'(k_0)-k_0 > w(k_0)-k_0 = \gamma_{k_0}
\]
as desired.
This proves $\bfx^\gamma <\bfx^{\gamma'}$ for any $w'$ with code $\gamma'$ appearing in the sum from~\eqref{Eq:Monks_formula2}.  The lemma now follows by induction.
\end{proof}

\begin{example}
In this example, we illustrate Equation \eqref{Eq:Monks_formula2} from the proof of Lemma \ref{P:Schubert_upper_triangular}.  Let $w=[1,5,3,6,2,4]$.  Then $w$ has Lehmer code $(0,3,1,2,0,0)$.  Using the notation in the proof of Lemma \ref{P:Schubert_upper_triangular}, $(k_0,j_0)=(2,6)$ and $(w(k_0),w(j_0))=(5,4)$.  We also have
$$v=wt_{2,6}=[1,4,3,6,2,5],\quad\text{and}\quad v'=vt_{1,2}=[4,1,3,6,2,5].$$
In this case, the sum in Equation \eqref{Eq:Monks_formula2} contains only one summand with $w'=vt_{2,4}=[1,6,3,4,2,5]$ yielding:
$$\mathfrak{S}_{[1,5,3,6,2,4]}(\bfx)=x_{2}\mathfrak{S}_{[1,4,3,6,2,5]}(\bfx)+\mathfrak{S}_{[4,1,3,6,2,5]}(\bfx)-\mathfrak{S}_{[1,6,3,4,2,5]}(\bfx).$$
The codes of $v$, $v'$ and $w'$ are respectively $(0,2,1,2,0,0)$, $(3,0,1,2,0,0)$ and $(0,4,1,1,0,0)$.
\end{example}

\begin{rem}\label{rem.monomials}
Lemma~\ref{P:Schubert_upper_triangular} is analogous to Billey and Haiman's Lemma 4.11 in~\cite{Billey-Haiman1995} which states that $\bfx^{\gamma}$ is the leading term (i.e.~maximal monomial) in the expansion of $\mathfrak{S}_w(\bfx)$ when imposing \emph{reverse lexicographical} order on the monomials.  It should be noted that reverse lexicographical order is not the converse of lexicographical order, so \cite[Lemma 4.11]{Billey-Haiman1995} does not directly imply Lemma \ref{P:Schubert_upper_triangular}.  However the proof of Lemma \ref{P:Schubert_upper_triangular} given above is modeled after Mcdonald's proof of \cite[Lemma 4.11]{Billey-Haiman1995} which appears in \cite[(4.16)]{McD91}.  The main difference in the proof Lemma \ref{P:Schubert_upper_triangular} above is that we use the ``smallest" inversion (i.e.~in the proof of Lemma~\ref{P:Schubert_upper_triangular} we take $k_0$ to be the smallest value for which $\gamma_{k_0}\neq 0$) and not the ``largest".  Observe that our argument using the ``smallest" inversion, as seen in Equation \eqref{Eq:Monks_formula2}, does not yield a manifestly positive formula for the expansion of Schubert polynomials as a sum of monomials.  However, the induction used to prove \cite[(4.16)]{McD91} does give a positive formula which is stated as a corollary in \cite[(4.19)]{McD91}.
\end{rem}

We can now prove our main theorem, which shows that the images of the Schubert polynomials corresponding to elements from $W(\alpha)$ form a basis of $H^*(\B^\alpha)$.

\begin{proof}[Proof of Theorem \ref{T:Schubert_Basis}]
Let $w\in W(\alpha)$.  Then there exists a unique $\tabone\in\RSCT_n(\alpha)$ for which $w=u_\tabone$.  Let $\gamma$ denote the Lehmer code of $w$, which is also the inversion vector of $\tabone$.  By Lemma~\ref{P:Schubert_upper_triangular}, we can write
$$\mathfrak{S}_w(\bfx)=\bfx^{\gamma}+\sum_\delta c_{\delta}\bfx^{\delta}$$
where the sum is over compositions $\delta$ and $\bfx^{\gamma}<\bfx^{\delta}$ for all $c_\delta\neq 0$.  We now have that
$$\pi_0(\mathfrak{S}_w(\bfx))=\pi_0(\bfx^{\gamma})+\sum_\delta c_{\delta}\,\pi_0(\bfx^{\delta}).$$
Since $\gamma$ is the inversion vector of $\tabone$, we have $\pi_0(\bfx^{\gamma})=\bfx^{\gamma}=\bfx^{\tabone}$.  Furthermore, Proposition \ref{P:monomial_expansion} implies
$$
\pi_0(\bfx^{\delta})=\sum d_\tabtwo\, \bfx^\tabtwo
$$
where the sum is over $\tabtwo\in\RSCT_n(\alpha)$ where $\bfx^\tabtwo\geq \bfx^\delta>\bfx^\tabone$ and hence we can write
$$
\pi_0(\mathfrak{S}_w(\bfx))=\bfx^\tabone+\sum_{\tabone<\tabtwo} g_\tabtwo\, \bfx^\tabtwo
$$
for some coefficients $g_\tabtwo$.  This equation implies that the transition matrix from the set $\{\pi_0(\mathfrak{S}_{w}(\bfx))\mid w\in W(\alpha)\}$ to the basis $\{x^\tabone \mid \tabone \in \RSCT_n(\alpha)\}$ of $\ca_0$ is invertible.  In fact, it is upper triangular with $1$'s on the diagonal.  This proves the theorem.
\end{proof}

Let $\mathfrak{S}_w(\mathbf{y}, \mathbf{x})\in \C[\ft\times \ft]$ denote the \textbf{double Schubert polynomial} indexed by $w\in W$; see \cite{Manivel} for the definition.  As a corollary of Theorem~\ref{T:Schubert_Basis}, we obtain the corresponding statement for equivariant cohomology.

\begin{cor}\label{cor.equiv-main-thm} The set $\{\pi(\mathfrak{S}_w(\mathbf{y}, \mathbf{x})) \mid w\in W(\alpha)\}$ forms an $S(\fs^*)$-module basis of the equivariant cohomology $H_S^*(\B^\alpha)\simeq \ca$.
\end{cor}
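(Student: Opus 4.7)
The plan is to deduce the equivariant statement directly from the ordinary cohomology version, Theorem~\ref{T:Schubert_Basis}, using the lifting principle provided by Lemma~\ref{P:equiv_basis}. The key technical input is the classical specialization property of double Schubert polynomials, namely that $\mathfrak{S}_w(\mathbf{y}, \mathbf{x})$ restricts to the ordinary Schubert polynomial $\mathfrak{S}_w(\mathbf{x})$ upon setting $\mathbf{y}=0$.

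First I would verify that $\ev(\pi(\mathfrak{S}_w(\mathbf{y}, \mathbf{x}))) = \pi_0(\mathfrak{S}_w(\mathbf{x}))$ for every $w\in W$. This follows from the commutative diagram~\eqref{eqn.cd}: applying $\ev$ to $\mathfrak{S}_w(\mathbf{y}, \mathbf{x}) \in \C[\ft\times\ft]$ (or to its image $i^*(\mathfrak{S}_w(\mathbf{y},\mathbf{x}))\in \C[\fs\times\ft]$) kills the $\mathbf{y}$-variables (respectively, the $\mathbf{z}$-variables), producing $\mathfrak{S}_w(0,\mathbf{x})=\mathfrak{S}_w(\mathbf{x})\in \C[\ft]$. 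Passing through the right vertical map of~\eqref{eqn.cd} then lands in $\pi_0(\mathfrak{S}_w(\mathbf{x})) \in \ca_0$.

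Next I would observe that the double Schubert polynomial $\mathfrak{S}_w(\mathbf{y},\mathbf{x})$ is homogeneous of degree $\ell(w)$ with respect to the grading in which every $x_i$ and $y_i$ has degree one. Consequently $\pi(\mathfrak{S}_w(\mathbf{y},\mathbf{x})) \in \ca$ is a homogeneous element in the grading of~\eqref{e.coordring}, and its image $\pi_0(\mathfrak{S}_w(\mathbf{x})) \in \ca_0$ is homogeneous of the same degree.

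By Theorem~\ref{T:Schubert_Basis} the family $\{\pi_0(\mathfrak{S}_w(\mathbf{x}))\mid w\in W(\alpha)\}$ is a homogeneous $\C$-basis of $\ca_0\simeq H^*(\B^\alpha)$. Setting $b_w := \pi_0(\mathfrak{S}_w(\mathbf{x}))$ and $B_w:=\pi(\mathfrak{S}_w(\mathbf{y},\mathbf{x}))$, the two bullet points above show that the $B_w$ are homogeneous elements of $\ca$ satisfying $\ev(B_w)=b_w$. Lemma~\ref{P:equiv_basis} then immediately yields that $\{B_w \mid w\in W(\alpha)\}$ is an $S(\fs^*)$-module basis of $\ca\simeq H_S^*(\B^\alpha)$, which is the desired conclusion. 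There is no genuine obstacle here: all the substantive combinatorial and algebraic content has already been established in Theorem~\ref{T:Schubert_Basis} and Lemma~\ref{P:equiv_basis}, and this corollary is essentially a formal lifting argument once the $\mathbf{y}=0$ specialization of double Schubert polynomials is invoked.
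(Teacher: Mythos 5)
Your proof is correct and takes essentially the same route as the paper: it identifies $\ev\circ\pi(\mathfrak{S}_w(\mathbf{y},\mathbf{x}))=\pi_0(\mathfrak{S}_w(\mathbf{x}))$ via the $\mathbf{y}=0$ specialization of double Schubert polynomials and then invokes Theorem~\ref{T:Schubert_Basis} together with Lemma~\ref{P:equiv_basis}. The only difference is that you spell out the homogeneity check and the commutativity of diagram~\eqref{eqn.cd}, which the paper leaves implicit.
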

\begin{proof} By Theorem~\ref{T:Schubert_Basis}, the polynomials $\{\pi_0(\mathfrak{S}_{w}(\mathbf{x})) \mid w\in W(\alpha)\}$ form a homogeneous basis of $H^*(\B^\alpha)$.  Since $\ev\circ\,\pi(\mathfrak{S}_w(\mathbf{y}, \mathbf{x})) = \pi_0(\mathfrak{S}_w(\mathbf{x}))$, the result now follows from Lemma~\ref{P:equiv_basis}.
\end{proof}

\begin{example}\label{ex.(2,2)-Schuberts}
Let $n=4$ and $\alpha=(2,2)$.  We calculate the image of each Schubert polynomial $\mathfrak{S}_w(\bfx)$ under $\pi_0$. We first recall the set $\RSCT_4(\alpha)$ and corresponding Springer monomial basis of $H^*(\B^\alpha)$; this data is displayed in the table below (c.f.~Example~\ref{ex.(2,2)}).
\[
\begin{tabular}{c|cccccc}
 $\tabone$ &
\begin{ytableau} 3 & 1\\ 4 & 2\\ \end{ytableau}&
\begin{ytableau} 4 & 1\\ 3 & 2\\ \end{ytableau}&
\begin{ytableau} 2 & 1\\ 4 & 3\\ \end{ytableau}&
\begin{ytableau} 3 & 2\\ 4 & 1\\ \end{ytableau}&
\begin{ytableau} 4 & 2\\ 3 & 1\\ \end{ytableau}&
\begin{ytableau} 4 & 3\\ 2 & 1\\ \end{ytableau}\\ \\ \hline \\
$\bfx^{\tabone}$ & $1$ &$x_3$ & $x_2$&$x_1$ & $x_1x_3$ & $x_1x_2$\\
\end{tabular}
\]
By degree considerations, it suffices to calculate $\pi_0(\mathfrak{S}_{w}(\bfx))$ for $\ell(w)\leq 2$ (if $\ell(w)\geq 3$ then $\pi_0(\mathfrak{S}_w(\bfx))=0$). We obtain the following; note that the last column records whether or not $w$ is an element of $W(\alpha)$.
\[
\begin{tabular}{c|c|c|c}
$w$ & $\mathfrak{S}_{w}(\bfx)$ & $\pi_0(\mathfrak{S}_{w}(\bfx))$ & $W(\alpha)$\\ \hline
$e$ & $1$ & $1$ & yes\\
$s_3$ & $\underline{x_3}+x_2+x_1$ & $\underline{x_3}+x_2+x_1$ & yes\\
$s_2$ & $\underline{x_2}+x_1$ & $\underline{x_2}+x_1$& yes \\
$s_2s_3$ & $\underline{x_2x_3}+x_1x_3+x_1x_2$ & $0$ & no\\
$s_3s_2$ & $\underline{x_2^2}+ x_1x_2 +x_1^2$ & $x_1x_2$ & no\\
$s_1$ & $\underline{x_1}$ & $\underline{x_1}$ & yes \\
$s_1s_3$ & $\underline{x_1x_3}+x_1x_2+x_1^2$ & $\underline{x_1x_3}+x_1x_2$& yes\\
$s_1s_2$ & $\underline{x_1x_2}$ & $\underline{x_1x_2}$ & yes \\
$s_2s_1$ & $\underline{x_1^2}$ & $0$ & no\\
\end{tabular}
\]
For each Schubert polynomial $\mathfrak{S}_{w}(\bfx)$, we have underlined the minimal monomial $\bfx^{\gamma}$, so $\gamma$ is the Lehmer code of $w$ as in Lemma~\ref{P:Schubert_upper_triangular}.
\end{example}

We make two observations from Example \ref{ex.(2,2)-Schuberts}.  The first is that the set $W(\alpha)$ does not uniquely satisfy the basis property from Theorem \ref{T:Schubert_Basis}.  In particular, $\pi_0(\mathfrak{S}_{s_1s_2}(\bfx))=\pi_0(\mathfrak{S}_{s_3s_2}(\bfx))$ and hence replacing $s_1s_2$ with $s_3s_2$ in $W(\alpha)$ also corresponds to a basis of $H^*(\B^\alpha)$.  The second is that each polynomial $\pi_0(\mathfrak{S}_{w}(\bfx))$ is a non-negative sum of Springer monomials.  This motivates the following question.

\begin{question}\label{quest.positivity}
Let $\alpha$ be a composition of $n$ and $w\in W$ and write
\begin{eqnarray}\label{eqn.Schubert.constants}
\pi_0(\mathfrak{S}_{w}(\bfx))=\sum_{\tabone\in\RSCT_n(\alpha)} g_{\tabone}\, \bfx^{\tabone}.
\end{eqnarray}
Do we have $g_{\tabone}\in \Z_{\geq 0}$ for all $\tabone\in\RSCT_n(\alpha)$?
\end{question}

Note that negative terms can appear in the expansion formula for the image of an individual monomial $\pi_0(\bfx^\delta)$, as seen in Example \ref{Ex:P_delta_expansion}.  Looking more closely at the calculation of $\pi_0(\mathfrak{S}_{s_2s_3}(\bfx))$ in Example \ref{ex.(2,2)-Schuberts} we find that
$$\pi_0(\mathfrak{S}_{s_2s_3}(\bfx))=\pi_0(x_2x_3)+\pi_0(x_1x_3)+\pi_0(x_1x_2)=(-x_1x_3-x_1x_2)+x_1x_3+x_1x_2=0.$$
Observe that $x_1x_3, x_1x_2\in\GP_{(2,2)}$ while the monomial $x_2x_3$ is not an element of $\GP_{(2,2)}$.  This example shows that although the structure constants are nonnegative in many examples, there is typically some cancellation to take into account.  The answer to Question~\ref{quest.positivity} is known to be `yes' in the special case that $\alpha$ is one of $(n)$, $(n-1,1)$, or $(1,1,\ldots, 1)$.  Note that when $\alpha=(1,1,\ldots, 1)$, the Springer fiber $\B^\alpha$ is the full flag variety, and we obtain a positive answer to Question~\ref{quest.positivity} using the formulas from~\cite{Billey-Haiman1995, McD91, FoSt94} .


\section{Connections with the geometry of Springer fibers}\label{sec.geometry}

It is well known that the Schubert polynomial $\mathfrak{S}_w(\mathbf{x})$ is a polynomial representative for the fundamental cohomology class of the Schubert variety $\B_{w_0w}:=\overline{Bw_0wB/B}$ where $w_0$ denotes the longest element of $W$.  It is therefore natural to ask if the polynomials $\phi_0^*(\mathfrak{S}_w(\mathbf{x}))$ represent a fundamental cohomology class of a subvariety in the Springer fiber $\B^\lambda$.  Unfortunately, due the fact that Springer fibers are usually singular, the classical notion of a fundamental cohomology class of a subvariety using Poincar\'{e} duality is not defined.  However, the notion of a fundamental homology class of a subvariety is well defined (see \cite{DLP1988} or \cite[Appendix B]{Fulton1997}).

We briefly recall the connections between homology classes and Schubert polynomials for the flag variety $\B$, which is smooth.  For any subvariety $Z\subseteq \B$, let $[Z]$ denote the corresponding fundamental homology class in $H_*(\B)$.  Since $\B$ is smooth, the Poincar\'{e} duality isomorphism implies that for each class $[Z]$, there exists a unique cohomology class $\sigma_Z$ for which
$$\sigma_Z\cap [\B]=[Z].$$
Here $\cap [\B]: H^*(\B)\rightarrow H_*(\B)$ denotes the cap product with the top fundamental class $[\B]$.  For the Schubert variety, the class $\sigma_w:=\sigma_{\B_{w_0w}}$ can be represented by the Schubert polynomial $\mathfrak{S}_w(\mathbf{x})$ using the Borel presentation of $H^*(\B)$.

Recall that the inclusion $\phi: \B^\lambda\hookrightarrow \B$ induces a surjective map $\phi_0^*: H^*(\B)\to H^*(\B^\lambda)$.  Since we do not consider equivariant cohomology in this section, we denote will denote $\phi_0^*$ by just $\phi^*$.  We now give a geometric interpretation of the classes $\phi^*(\sigma_w)\in H^*(\B^\lambda)$, which are represented by the polynomials $\phi^*(\mathfrak{S}_w(\bfx)) = \pi_0(\mathfrak{S}_w(\mathbf{x}))$ in $\ca_0$.  Note that the following proposition is true for any subvariety $X$ of the flag variety with inclusion map $\phi:X\hookrightarrow \B$ (not just Springer fibers).

\begin{prop}\label{P:homology_classes}
Let $\sigma_{w}\in H^*(\B)$ denote the fundamental cohomology class of the Schubert variety $\B_{w_0w}$. Then

\begin{equation}\label{Eq:Geometry_sp}
\phi^*(\sigma_w)\cap [\B^\lambda]=[\phi^{-1}(g\B_{w_0w})]\in H_*(\B^\lambda)
\end{equation}
for generic $g\in G$.
\end{prop}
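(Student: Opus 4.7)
The plan is to combine Kleiman's transversality theorem with a standard intersection-theoretic identity to express the cap product as the class of a geometric intersection. The argument would have two main steps: first I would produce a translate $g\B_{w_0w}$ that meets $\B^\lambda$ properly in $\B$, and then I would apply a cap-product formula that turns this proper intersection into the desired homology class.

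For the first step, since $G$ is connected and acts algebraically on $\B$ the homology class $[g\B_{w_0w}]\in H_*(\B)$ is independent of $g$, so Poincar\'e duality on the smooth ambient $\B$ gives $\sigma_w\cap[\B]=[g\B_{w_0w}]$ for every $g\in G$. I would then invoke Kleiman's transversality theorem for the transitive $G$-action on $\B$ to conclude that for generic $g$ the intersection $\B^\lambda\cap g\B_{w_0w}$ is either empty or pure of the expected dimension $\dim\B^\lambda-\ell(w)$, and is generically transverse along the smooth loci of both subvarieties. Consequently $\phi^{-1}(g\B_{w_0w})$ would carry a well-defined fundamental class in $H_*(\B^\lambda)$ of the correct degree.

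For the second step I would rely on the following standard fact from intersection theory: if $Y$ is smooth, $\phi\colon X\hookrightarrow Y$ is any closed subvariety, and $Z\subseteq Y$ is another subvariety that meets $X$ properly, then
\[
\phi^*(\sigma_Z)\cap[X] \;=\; [X\cap Z] \quad \text{in } H_*(X),
\]
where $\sigma_Z\in H^*(Y)$ is the Poincar\'e dual of $[Z]$. This identity is a consequence of Fulton's refined Gysin pullback formalism (\emph{Intersection Theory}, Chapter 8); the smoothness of $Y$ allows one to define the refined pullback even when $X$ and $Z$ are themselves singular, and a standard computation shows the refined cycle reduces to the naive fundamental class of $X\cap Z$ whenever the intersection is proper. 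Applying this with $X=\B^\lambda$, $Z=g\B_{w_0w}$ (for $g$ generic as in the first step), and using that $\sigma_{g\B_{w_0w}}=\sigma_w$, would yield the desired identity.

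The main obstacle will be the intersection-theoretic step: both $\B^\lambda$ and $\B_{w_0w}$ are typically singular, so the classical transverse-intersection argument for smooth submanifolds does not apply directly. The argument would instead exploit the smoothness of the ambient flag variety $\B$ together with the refined Gysin formalism, and carefully verify that for proper intersections the refined cycle class coincides with the naive fundamental class of $\phi^{-1}(g\B_{w_0w})$.
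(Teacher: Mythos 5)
Your proposal takes a genuinely different route from the paper. The paper resolves the singularities of $\B^\lambda$ via Hironaka, works with $f=\phi\circ r:\tilde\B\to\B$ (a proper morphism between smooth varieties), pushes forward $f^*(\sigma_w)\cap[\tilde\B]$ along $r$ via the projection formula, and then uses Kleiman transversality together with Eisenbud--Harris and the compatibility of the cycle map with smooth pullback (Fulton, Cor.~19.2). You instead propose to work directly on the singular $\B^\lambda$ with Fulton's refined Gysin formalism, bypassing the resolution entirely. That strategy is defensible, and avoiding Hironaka is a genuine simplification of the machinery invoked, but there is a real gap in the intersection-theoretic step as you have written it.

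The issue is in the sentence asserting that ``a standard computation shows the refined cycle reduces to the naive fundamental class of $X\cap Z$ whenever the intersection is proper.'' This is false: proper intersection guarantees that the refined product $X\cdot Z\in A_*(X\cap Z)$ is a nonnegative cycle $\sum m_i[W_i]$ supported on the components of $X\cap Z$, but the multiplicities $m_i$ can exceed $1$ (the standard picture is a line tangent to a conic, which meets it properly but with multiplicity two). To conclude $m_i=1$ for all $i$, and hence $X\cdot Z=[X\cap Z]$, you need the stronger conclusion of Kleiman in characteristic zero, namely that for generic $g$ the intersection is \emph{generically transverse} --- both subvarieties smooth at the generic point of each component, with tangent spaces spanning --- which you do state in your first step but then drop when formulating the key lemma. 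You also need to make explicit the compatibility linking the singular cap product $\phi^*(\sigma_Z)\cap[X]$ (in Borel--Moore homology) with the Chow-theoretic refined product under the cycle map; this is the content of Fulton Chapter~19 but is not a purely formal remark. With the lemma restated for generically transverse intersections and the cap-product/cycle-map compatibility cited explicitly, your argument closes, and does so without invoking a desingularization as in the paper's proof.
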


\begin{proof}
Let $r:\tilde\B\rightarrow \B^\lambda$ denote a desingularization of the Springer fiber $\B^\lambda$.  Note that the fact that such a resolution exists is due to Hironaka \cite{Hironaka1964}.  Let $f:=\phi\circ r:\tilde\B\rightarrow \B$ and consider the diagram

$$
\begin{tikzcd}
 & H^*(\tilde\B) \arrow[r, "\cap{[\tilde\B]}"]  & H_*(\tilde\B) \arrow[d, "r_*"] \\
H^*(\B) \arrow[ur, "f^*"] \arrow[r, "\phi^*"]  & H^*(\B^\lambda)\arrow[u, "r^*"] \arrow[r, "\cap{[\B^\lambda]}"] & H_*(\B^\lambda)
\end{tikzcd}
$$
Since $r:\tilde\B\rightarrow \B^\lambda$ is a surjective, birational morphism, we must have that $r_*([\tilde\B])=[\B^\lambda]$.  This implies
$$r_*(f^*(\sigma_w)\cap [\tilde\B])=r_*(r^*\circ \phi^*(\sigma_w)\cap [\tilde\B])=\phi^*(\sigma_w)\cap r_*([\tilde\B])=\phi^*(\sigma_w)\cap [\B^\lambda]$$
which is the left hand side of Equation \eqref{Eq:Geometry_sp}.

Kleiman's transversality theorem \cite{Kleiman1974} implies that for generic $g\in G$, the preimage $f^{-1}(g\B_{w_0w})$ is generically transverse. By \cite[Theorem 1.23]{EH2016}, we have that $f^*([g\B_{w_0w}])=[f^{-1}(g\B_{w_0w})]$ as elements in the Chow ring of $\tilde\B$ (graded by codimension).  Since smooth pullback commutes with the cycle map from the Chow ring to cohomology \cite[Corollary 19.2]{Fulton1984}, it follows that $f^*(\sigma_w)=\sigma_{f^{-1}(g\B_{w_0w})}$ in $H^*(\tilde\B).$  (Note that the maps $r,\phi$ are proper morphisms and $f:\tilde\B\rightarrow \B$ is a proper morphism between smooth varieties.  Hence proper pushforward and smooth pullback are well defined on Chow groups/rings).    We now have that
$$r_*(f^*(\sigma_w)\cap [\tilde\B])=r_*(\sigma_{f^{-1}(g\B_{w_0w})}\cap [\tilde\B])=r_*([f^{-1}(g\B_{w_0w})]).$$
Again, by Kleiman's transversality theorem, the varieties $f^{-1}(g\B_{w_0w})$ and $\phi^{-1}(g\B_{w_0w})$ are both generically reduced and of the same codimension.  Since $r:\tilde\B\rightarrow\B^\lambda$ is birational, the varieties $f^{-1}(g\B_{w_0w})$ and $\phi^{-1}(g\B_{w_0w})$ are also of the same dimension and hence
$$r_*([f^{-1}(g\B_{w_0w})])=[\phi^{-1}(g\B_{w_0w})]$$
which completes the proof.
\end{proof}

Observe that the variety $\phi^{-1}(g\B_{w})$ is simply the intersection $\B^{\lambda}\cap g\B_{w}\subseteq\B^{\lambda}$.  If we let $C_w:=BwB/B$ denote the open Schubert cell, then it is known that for carefully chosen $g'\in G$, the collection of nonempty intersections $\{\B^{\lambda}\cap g'C_w\mid w\in W \}$ is an affine paving of $\B^{\lambda}$ \cite{Shimomura1980, Tymoczko2006}.  In these cases, the corresponding nonzero homology classes $\{[\B^{\lambda}\cap g'\B_w]\mid w\in W\}$ form a basis of $H_*(\B^{\lambda})$.  We remark that the generic condition of $g\in G$ in Proposition~\ref{P:homology_classes} typically excludes any $g'$ such that $\{\B^{\lambda}\cap g'C_w\mid w\in W \}$ is an affine paving of $\B^{\lambda}$.  Indeed, otherwise the map $\cap [\B^\lambda]: H^*(\B^\lambda)\rightarrow H_*(\B^\lambda)$ would be an isomorphism and imply that the Poincar\'e polynomials of Springer fibers are palindromic, which is false in most cases.

One immediate consequence of Proposition~\ref{P:homology_classes} is that linear relations among the classes $\{\phi^*(\sigma_w)\mid w\in W\}$ in $H^*(\B^{\lambda})$ translate to linear relations on $\{[\B^{\lambda}\cap g\B_{w_0w}] \mid w\in W\}$ in $H_*(\B^{\lambda})$.

\begin{cor}\label{cor.homology_linear_combs}
Let $g\in G$ be generic and suppose that $\sum_{w\in W} c_w\, \phi^*(\sigma_w)=0$ in $H^*(\B^{\lambda})$ for some coefficients $c_w\in \C$.  Then $$\sum_{w\in W} c_w\, [\B^{\lambda}\cap g\B_{w_0w}]=0$$ in $H_*(\B^{\lambda})$.  In particular, if $\phi^*(\sigma_w)=0$, then $[\B^{\lambda}\cap g\B_{w_0w}]=0$.
\end{cor}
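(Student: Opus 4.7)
The plan is to deduce the corollary directly from Proposition~\ref{P:homology_classes} by applying the cap product with the fundamental class $[\B^\lambda]$ to the assumed relation in cohomology, then translating term by term. Concretely, starting from $\sum_{w\in W} c_w\, \phi^*(\sigma_w)=0$ in $H^*(\B^\lambda)$ and applying $-\cap [\B^\lambda]\colon H^*(\B^\lambda)\rightarrow H_*(\B^\lambda)$ to both sides, bilinearity of the cap product gives
\[
\sum_{w\in W} c_w\,\bigl(\phi^*(\sigma_w)\cap [\B^\lambda]\bigr)=0 \in H_*(\B^\lambda).
\]

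Next, I would invoke Proposition~\ref{P:homology_classes} to rewrite each summand as a fundamental class of an intersection. The identification $\phi^*(\sigma_w)\cap [\B^\lambda]=[\B^\lambda\cap g\B_{w_0w}]$ holds for generic $g\in G$, so substituting yields the desired equation $\sum_{w\in W} c_w\,[\B^\lambda\cap g\B_{w_0w}]=0$. The ``in particular'' part then follows by applying the main statement to the single-term relation where one coefficient equals $1$ and all others are $0$.

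The one technical subtlety to address carefully is that Proposition~\ref{P:homology_classes} applies individually to each $w\in W$ on a generic open subset of $G$, but a priori the open condition may depend on~$w$. To handle this, I would observe that $W$ is finite, so there are only finitely many non-empty Zariski open conditions on $g\in G$ under consideration. Their intersection is therefore a non-empty Zariski open subset $U\subseteq G$, and any $g\in U$ simultaneously satisfies all the hypotheses of Proposition~\ref{P:homology_classes} for every $w$ appearing in the relation. This justifies choosing a single generic $g$ that works uniformly for the statement. This genericity reduction is the main (and really only) obstacle; once established, the corollary is a one-line formal consequence of functoriality and linearity of the cap product. I would make this intersection argument explicit at the outset of the proof to ensure there is no ambiguity in the meaning of ``generic'' in the conclusion.
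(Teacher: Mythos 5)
Your proposal is correct and follows essentially the same route as the paper, which treats the corollary as an immediate consequence of Proposition~\ref{P:homology_classes} via linearity of the cap product. Your explicit remark that the finitely many generic open conditions (one for each $w\in W$) can be intersected to obtain a single generic $g$ is a reasonable point of care that the paper leaves implicit.
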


The converse of this statement is not true since $\cap [\B^\lambda]: H^*(\B^\lambda)\rightarrow H_*(\B^\lambda)$ is usually not an isomorphism.

Note that $\phi^*(\sigma_w)$ can be computed explicitly by expanding its polynomial representative $\pi_0(\mathfrak{S}_w(\bfx))$ in terms of the Springer monomial basis using Theorem~\ref{T:P_basis_det_formula}.  Hence Corollary~\ref{cor.homology_linear_combs} gives a combinatorially sufficient condition to determine if the homology classes $[\B^{\lambda}\cap g\B_{w_0w}]=0$ for generic $g\in G$.

\end{document}